\newtheorem{theorem}{Theorem}
\newtheorem{proposition}[theorem]{Proposition}
\newtheorem{corollary}[theorem]{Corollary}
\newtheorem{lemma}[theorem]{Lemma}
\newtheorem{definition}[theorem]{Definition}
\newtheorem{remark}[theorem]{Remark}
\let\nc\newcommand
\nc{\la}{\label}
\def\bthm{\begin{theorem}}
\def\ethm{\end{theorem}}
\def\blemma{\begin{lemma}}
\def\elemma{\end{lemma}}
\def\bproof{\begin{proof}}
\def\eproof{\end{proof}}
\def\bprop{\begin{proposition}}
\def\eprop{\end{proposition}}
\def\bcor{\begin{corollary}}
\def\ecor{\end{corollary}}
\def\T#1{\mbox{\sf Tails}(#1)}
\def\Z{\mathbb{Z}}
\def\T{\mathbb{T}}
\def\A{\mathbb{A}}
\def\D{\mathcal{D}}
\def\k{\mathsf k}
\def\c{\mathbb{C}}
\nc{\Hom}{{\rm{Hom}}}
\nc{\chara}{{\rm{char}}}
\nc{\Ext}{{\rm{Ext}}}
\nc{\HOM}{\underline{\rm{Hom}}}
\nc{\EXT}{\underline{\rm{Ext}}}
\nc{\TOR}{\underline{\rm{Tor}}}
\nc{\End}{{\rm{End}}}
\nc{\GL}{{\rm{GL}}}
\nc{\SL}{{\rm{SL}}}
\nc{\Rep}{{\rm{Rep}}}
\nc{\ad}{{\rm{ad}}}
\nc{\dlim}{\varinjlim}
\newcommand{\Frac}{{\rm{Frac}}}
\newcommand{\Aut}{{\rm{Aut}_{\c}}}
\newcommand{\Autk}{{\rm{Aut}}}
\begin{document}
\title[Algebras of invariant differential operators]{Algebras of invariant differential operators} 

\author{Vyacheslav Futorny}
\author{Jo\~ao Schwarz}
\address{Instituto de Matem\'atica e Estat\'istica, Universidade de S\~ao
Paulo,  S\~ao Paulo SP, Brasil} \email{futorny@ime.usp.br,}\email{jfschwarz.0791@gmail.com}

\begin{abstract}
We prove that an invariant subalgebra $A_n^W$ of the Weyl algebra $A_n$ is a Galois order over an adequate commutative subalgebra $\Gamma$ when 
$W$ is a two-parameters irreducible unitary reflection group $G(m,1,n)$, $m\geq 1$, $n\geq 1$, including  the Weyl group of type $B_n$, or
 alternating group $\mathcal A_n$, or the product of $n$ copies of a cyclic group of  fixed finite order.  Earlier this was established for the symmetric group in \cite{Futorny3}. In each of the cases above, except for the alternating groups, we show that $A_n^W$ is free as a right (left) $\Gamma$-module. Similar results are established for the algebra of $W$-invariant differential operators on the $n$-dimensional torus where $W$ is  a symmetric group $S_n$ or orthogonal group of type $B_n$ or  $D_n$. 
As an application of our technique we prove the quantum Gelfand-Kirillov conjecture for $U_q(sl_2)$, the first  Witten deformation and the Woronowicz deformation.
\end{abstract}

\maketitle

\section{Introduction}

Throughout the paper $\k$ will denote an algebraically closed field of zero characteristic. All  considered rings are algebras over $\k$.

The theory of Galois rings and orders developed in  \cite{Futorny}, \cite{Futorny2} had a strong impact on the representation theory of various classes of algebras, first of all 
the universal enveloping algebra of $gl_n$ \cite{Ovsienko}, restricted Yangians of type $A$ and, more general, finite $W$-algebras of type $A$  \cite{Futorny4}, \cite{FMO}. Another class of examples of Galois rings are Generalized Weyl algebras of rank $1$ (\cite{Bavula}) over integral domains with infinite order automorphisms --- and some of their tensor products, which include in particular such algebras as 
  the $n$-th Weyl algebra $A_n$, the quantum plane, the $q$-deformed Heisenberg
algebra, quantized Weyl algebras,  $U(sl_2)$ and its quantization, and the Witten-Woronowicz algebra. The main motivation for the development of this theory was 
a study of representations of infinite dimensional   associative algebras via representations of its commutative subalgebras, general framework was established in \cite{DFO}.

One of the important consequences of the Galois order structure is a finiteness and nontriviality of lifting of maximal ideals of the commutative subalgebra. Namely, if $U$ is a Galois order 
over a commutative subalgebra $\Gamma$ then for any maximal ideal $\bf m$ of $\Gamma$ there exists finitely many left maximal ideals of $U$ containing $\bf m$ (and hence finitely many isomorphism classes of irreducible $U$-modules with nonzero annihilator of $\bf m$), see \cite{Futorny2}. This allows to parametrize (up to some finiteness) irreducible $U$-modules with $\Gamma$-torsion by maximal ideals of $\Gamma$. 

 In the case of $gl_n$ further developments led to recent breakthrough results
in its representation theory (see \cite{EMV}, \cite{Hartwig}, \cite{FGRZ} and references therein).

In Section \ref{sec-Galois} we recall basic facts about Galois orders and establish important properties of invariant subalgebras of Galois orders.  In Section \ref{sec-GWA}
 we discuss the Generalized Weyl algebras. Theorem \ref{thm-GWA} here gives necessary and sufficient conditions under which 
the Generalized Weyl algebra $D(a, \sigma)$ is a Galois order over $D$. 
As a consequence we prove the Quantum Gelfand-Kirillov Conjecture for $U_q(sl_2)$  (Corollary \ref{cor-U_q}), for the first  Witten deformation (Corollary \ref{cor-Witten})
and for the Woronowicz deformation
(Corollary \ref{cor-Woron}).

Finally, in Section \ref{sec-dif-op} we study invariant subalgebras of differential operators. 
It was shown in \cite{Futorny2} that there exists a commutative polynomial subalgebra
$\Gamma$ which is a Harish-Chandra subalgebra of $A_n(\k)^{S_n}$ and $A_n(\k)^{S_n}$ is a Galois order over $\Gamma$. 

Our first main result is the extension of this result for  two-parameters irreducible unitary reflection groups $G(m,1,n)$, $m\geq 1$, $n\geq 1$, including  the Weyl group of type $B_n$, alternating groups $\mathcal A_n$  and products of  cyclic groups $G_m^{\otimes n}$ (see Theorem \ref{thm-B},    Collorary \ref{cor-Alternating group}, Proposition \ref{prop-cyclic}): 

\begin{theorem}\label{thm-main1} 
Let
 $W \in \{G(m, 1, n)$, $m\geq 1$, $n\geq 1, \mathcal A_n, G_m^{\otimes n} \}$. Then $A_{n}^W$ is a Galois order over the Harish-Chandra subalgebra $\Gamma=\k[t_1,\ldots,t_n]^W$, where $t_i= \partial_i x_i, i=i, \ldots, n$.
\end{theorem}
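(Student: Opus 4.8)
The plan is to establish Theorem \ref{thm-main1} by reducing the three families of groups to a single uniform argument built on the theory of Galois orders recalled in Section \ref{sec-Galois}. The starting point is the known result from \cite{Futorny2} that $A_n^{S_n}$ is a Galois order over $\Gamma=\k[t_1,\ldots,t_n]^{S_n}$, together with the general framework that presents $A_n$ as a Galois ring associated to the standard commutative subalgebra $L=\k[t_1,\ldots,t_n]$ (the polynomials in the commuting Euler-type operators) with a group acting by shift automorphisms. The key structural observation is that $G(m,1,n)$, the alternating group $\mathcal A_n$, and the product of cyclic groups $G_m^{\otimes n}$ all act on $A_n$ compatibly with this Galois data: each acts on $L$ by algebra automorphisms and normalizes the relevant skew group structure, so that the invariants $A_n^W$ again sit inside a skew group algebra over a field of fractions of $L$.

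First I would set up the general machinery from Section \ref{sec-Galois}: if $U$ is a Galois order over $\Gamma$ with respect to a group $\mathcal M$ acting on a field $L$, and $W$ is a finite group acting on $U$ preserving $L$ and $\Gamma$ and normalizing $\mathcal M$ inside the semidirect structure, then $U^W$ is a Galois order over $\Gamma^W=\Gamma\cap U^W$. This is precisely the ``invariant subalgebra of a Galois order'' property advertised in the introduction. The point is that the invariant-theoretic flatness and the Harish-Chandra property descend along the finite group $W$, because $\k$ has characteristic zero and $W$ is finite, so averaging (the Reynolds operator $\frac{1}{|W|}\sum_{w\in W} w$) provides a $\Gamma^W$-module splitting. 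Thus the crux is to verify the hypotheses of this descent lemma for each $W$ in the list, and to identify $\Gamma^W$ with $\k[t_1,\ldots,t_n]^W$.

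Second, I would treat the three cases by relating each $W$ to the symmetric group case already handled. For $W=G(m,1,n)$, one writes $G(m,1,n)=G_m^{\otimes n}\rtimes S_n$, so I would first show that $A_n^{G_m^{\otimes n}}$ is a Galois order (this is the content of Proposition \ref{prop-cyclic}), and then observe that the residual $S_n$-action on $A_n^{G_m^{\otimes n}}$ again has the required Galois-descent form, so that $A_n^{G(m,1,n)}=(A_n^{G_m^{\otimes n}})^{S_n}$ is a Galois order; this is Theorem \ref{thm-B}. For the alternating group $\mathcal A_n\subset S_n$ I would use that $\mathcal A_n$ is an index-two subgroup, so $A_n^{\mathcal A_n}$ is a module-finite extension of $A_n^{S_n}$ and inherits the Galois-order structure over $\Gamma=\k[t_1,\ldots,t_n]^{\mathcal A_n}$ (Corollary \ref{cor-Alternating group}). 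In each case I would identify the commutative base as $\k[t_1,\ldots,t_n]^W$ and check that it is a Harish-Chandra subalgebra, i.e.\ that $\Gamma$ is a maximal commutative subalgebra and that the relevant $\HOM$ bimodules are finitely generated.

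The main obstacle I anticipate is the verification that $W$ genuinely normalizes the monoid $\mathcal M$ of shift automorphisms governing the Galois structure, rather than merely acting on $L$; the shifts attached to $A_n$ are by the standard lattice $\Z^n$, and one must confirm that conjugation by $w\in W$ permutes these shifts (for the reflection and permutation parts this is a lattice symmetry, but for the diagonal cyclic factors $G_m^{\otimes n}$ one must check the action on the Euler operators is trivial enough that $\mathcal M$ is preserved). Establishing that $\Gamma^W$ remains Harish-Chandra and that the freeness of $A_n^W$ as a $\Gamma$-module survives the passage to invariants — ensuring no torsion is introduced — is where the characteristic-zero averaging argument must be deployed carefully, and that is the technical heart of the proof.
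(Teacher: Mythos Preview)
Your overall architecture is close to the paper's, but there are two genuine gaps.

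First, your descent principle is overstated. The paper's Lemma \ref{lemma1} does \emph{not} assert that $U^H$ is automatically a Galois order whenever $U$ is; it carries the extra hypothesis that $U^H$ is already known to be a Galois \emph{ring} in an appropriate invariant skew monoid ring $\mathfrak K^*\subset\mathfrak K$. Establishing that hypothesis is a separate, nontrivial step: by Proposition \ref{useit}(i) one must exhibit explicit $W$-invariant elements of $A_n$ whose supports generate $\Z^n$ as a monoid. The paper does this concretely---using $u^*=\sum_i x_i$ and $u^{**}=\sum_i\partial_i$ for $\mathcal A_n$ (Corollary \ref{cor-Alternating group}) and $X_m=\sum_i x_i^m$, $\Delta_m=\sum_i\partial_i^m$ for $G(m,1,n)$ (Theorem \ref{thm-B})---and your proposal omits this step entirely. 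Without it, the Reynolds-operator argument only shows $U^H$ is projective over $\Gamma^H$; it does not show that $U^H$ localizes to all of $(L*\mathfrak M)^W$, which is what ``Galois ring'' means.

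Second, two of your case reductions do not work as written. For $G_m^{\otimes n}$, the action on $A_n$ fixes every $t_i$ and commutes with every shift $\sigma_i$, yet is nontrivial on $A_n$ (it scales $x_i,\partial_i$ by roots of unity); thus it is \emph{not} of the form $g(rm)=g(r)(gmg^{-1})$ on $L*\Z^n$, and neither your descent framework nor Lemma \ref{lemma1} applies. The paper avoids descent here altogether: it identifies $A_n^{G_m^{\otimes n}}$ explicitly as a generalized Weyl algebra (Proposition \ref{prop-A_n^m}) and invokes Theorem \ref{thm-GWA}. For $\mathcal A_n$, you propose to deduce the Galois-order property of $A_n^{\mathcal A_n}$ from that of the \emph{smaller} algebra $A_n^{S_n}$ via a ``module-finite extension'' argument; but Lemma \ref{lemma1} only transfers the property downward from $U$ to $U^H$, not upward to an overring. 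The paper instead descends directly from $A_n$ using the group $H=\mathcal A_n$, which is possible precisely because $u^*$ and $u^{**}$ are already $\mathcal A_n$-invariant.
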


As a consequence we obtain that, with exception of $\mathcal{A}_n$, for all these groups $W$ the invariant subalgebras of Weyl algebras  are free as  right (left) modules over $\k[t_1,\ldots,t_n]^W$   (Corollary \ref{cor-sym-free}, Proposition \ref{prop-cyclic}, Theorem \ref{thm-B}). This was  
 conjectured in type $A$ in \cite{Futorny3}. 

Hence, our second main result is

\begin{theorem}\label{thm-main2}
Let $W \in \{G(m, 1, n)$, $m\geq 1$, $n\geq 1, G_m^{\otimes n} \}$. Then $A_{n}^W$ is free as a right (left) module over $\k[t_1,\ldots,t_n]^W$. 
\end{theorem}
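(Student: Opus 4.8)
The plan is to work with the weight decomposition of the Weyl algebra rather than with its Bernstein associated graded, the latter being the $W$-invariants of a polynomial ring in $2n$ variables on which $W$ does \emph{not} act as a reflection group (a reflection of $V$ has codimension-two fixed locus on $V\oplus V^\ast$). Set $t_i=x_i\partial_i$ and grade $A_n$ by the $\Z^n$-weight, $\deg x_i=e_i$, $\deg\partial_i=-e_i$. For $\alpha\in\Z^n$ let $m_\alpha=\prod_{\alpha_i>0}x_i^{\alpha_i}\prod_{\alpha_i<0}\partial_i^{-\alpha_i}$ be the weight-$\alpha$ monomial; then the weight-$\alpha$ component of $A_n$ is free of rank one over $\k[t]:=\k[t_1,\dots,t_n]$ on $m_\alpha$, so $A_n=\bigoplus_{\alpha\in\Z^n}\k[t]\,m_\alpha$ is $\k[t]$-free (as a left and as a right module). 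Each $W$ acts by weight-compatible automorphisms: the cyclic factors of $G(m,1,n)$ and all of $G_m^{\otimes n}$ fix every $t_i$ and only rescale $m_\alpha$ by a character, while the symmetric part permutes the $t_i$ and the $m_\alpha$ through the induced action of $W$ on $\Z^n$. Hence $W$ acts on $\k[t]$ through a permutation group, giving $\Gamma=\k[t]$ for $G_m^{\otimes n}$ and $\Gamma=\k[t]^{S_n}$ for $S_n$ and $G(m,1,n)$, all polynomial rings, whereas $\Gamma=\k[t]^{\mathcal A_n}$ is only a hypersurface.

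Regrouping the basis by $W$-orbits on $\Z^n$ and using that $W$ preserves each block $P_{[\alpha]}:=\bigoplus_{\beta\in W\alpha}\k[t]\,m_\beta$ gives a decomposition of $\Gamma$-modules $A_n^W=\bigoplus_{[\alpha]\in\Z^n/W}M_{[\alpha]}$ with $M_{[\alpha]}=P_{[\alpha]}^{\,W}$. For $W\in\{S_n,G(m,1,n),G_m^{\otimes n}\}$ I would argue as follows. Each $P_{[\alpha]}$ is $\k[t]$-free of finite rank $|W\alpha|$, and $\k[t]$ is a free $\Gamma$-module by the Chevalley--Shephard--Todd theorem (here $W$ acts on $\k[t]$ through a complex reflection group, namely $S_n$ or the trivial group); hence $P_{[\alpha]}$ is a finitely generated free $\Gamma$-module. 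Since $\mathrm{char}\,\k=0$, the Reynolds operator $\tfrac1{|W|}\sum_{w\in W}w$ is $\Gamma$-linear, because the elements of $\Gamma$ are $W$-invariant, and it is an idempotent projecting $P_{[\alpha]}$ onto $M_{[\alpha]}$; thus $M_{[\alpha]}$ is a direct summand of a finitely generated free $\Gamma$-module, i.e.\ finitely generated projective. As $\Gamma$ is a polynomial ring, each $M_{[\alpha]}$ is free (Quillen--Suslin; alternatively graded Nakayama, $M_{[\alpha]}$ being a graded submodule for the $t$-degree grading). Summing over orbits shows $A_n^W$ is $\Gamma$-free, and the left-module statement is identical since $A_n$ is free also as a left $\k[t]$-module. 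This establishes the three reflection cases (the case $S_n$ settling the freeness conjecture of \cite{Futorny3}).

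The main obstacle is $W=\mathcal A_n$, which contains no reflections: $\Gamma=\k[t]^{\mathcal A_n}$ is not polynomial, and in fact $\k[t]$ is not even projective over $\Gamma$, so the blocks $P_{[\alpha]}$ are no longer $\Gamma$-projective and the argument above breaks down orbit by orbit. I would instead exploit the index-two inclusion $\mathcal A_n\subset S_n$. Using $\mathrm{Ind}^{S_n}_{\mathcal A_n}\mathbf 1=\mathbf 1\oplus\mathrm{sgn}$, one obtains the $\Gamma_0$-module decompositions $A_n^{\mathcal A_n}=A_n^{S_n}\oplus (A_n)^{S_n,\mathrm{sgn}}$ and $\Gamma=\Gamma_0\oplus\Delta\,\Gamma_0$, where $\Gamma_0=\k[t]^{S_n}$ and $\Delta=\prod_{i<j}(t_i-t_j)$. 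Both summands of $A_n^{\mathcal A_n}$ are free over the polynomial ring $\Gamma_0$ (the invariant part by the previous paragraph, the sign-isotypic part by the same summand-of-free-module argument applied to the $\mathrm{sgn}$-relative invariants of the reflection group $S_n$). The crux is to upgrade this $\Gamma_0$-freeness to $\Gamma$-freeness across the singular quadratic extension $\Gamma_0\subset\Gamma$, $\Delta^2\in\Gamma_0$; I expect this to reduce to showing that left multiplication by $\Delta$ carries a $\Gamma_0$-basis of $A_n^{S_n}$ to a $\Gamma_0$-basis of $(A_n)^{S_n,\mathrm{sgn}}$, i.e.\ that $(A_n)^{S_n,\mathrm{sgn}}=\Delta\cdot A_n^{S_n}$, a noncommutative analogue of the classical identity $\k[t]^{S_n,\mathrm{sgn}}=\Delta\,\k[t]^{S_n}$. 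This matching, rather than any new structural input, is where the real work lies.
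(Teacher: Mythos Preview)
For the three reflection-group cases $W\in\{S_n,\,G(m,1,n),\,G_m^{\otimes n}\}$ your argument is correct and is essentially the paper's. Both reduce to: the Reynolds projector exhibits $A_n^W$ as a $\Gamma$-direct summand of a free $\Gamma$-module, hence projective, and over the polynomial ring $\Gamma$ projective implies free. The only cosmetic difference is that you work orbit-by-orbit on the weight decomposition and invoke Quillen--Suslin on each finitely generated block, whereas the paper applies Lemma~\ref{lemma2} once globally and then Bass's theorem that non-finitely-generated projectives over a connected Noetherian ring are free (\cite{Bass}, Corollary~4.5).

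For $W=\mathcal A_n$ there is a genuine gap in your proposal. The identity you are aiming for, $(A_n)^{S_n,\mathrm{sgn}}=\Delta\cdot A_n^{S_n}$, is false. For $n=3$ take
\[
c \;=\; x_1(t_2-t_3)+x_2(t_3-t_1)+x_3(t_1-t_2)\ \in\ (A_3)^{S_3,\mathrm{sgn}}.
\]
Its weight-$(1,0,0)$ component is $(t_2-t_3)x_1\in\k[t]\,x_1$; if $c=\Delta\,b$ with $b\in A_3^{S_3}$ then, comparing weight-$(1,0,0)$ parts, $\Delta$ would divide $t_2-t_3$ in $\k[t]$, which is impossible by degree. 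The commutative fact $\k[t]^{S_n,\mathrm{sgn}}=\Delta\,\k[t]^{S_n}$ uses that every skew polynomial vanishes on all reflecting hyperplanes $t_i=t_j$; in the Weyl algebra a skew element can live in rank-one weight components $\k[t]\,m_\alpha$ where the single transposition swapping $\alpha$ with another weight imposes no such vanishing, so divisibility by $\Delta$ genuinely fails. Since your matching of $\Gamma_0$-bases across $A_n^{S_n}$ and $(A_n)^{S_n,\mathrm{sgn}}$ rests on this identity, the upgrade from $\Gamma_0$-freeness to $\Gamma$-freeness does not go through as written.

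For comparison, the paper does not attempt any such matching: it simply declares that the $S_n$ argument (Lemma~\ref{lemma2} together with \cite{Bass}) applies verbatim to $\mathcal A_n$. As you correctly point out, the hypothesis of Lemma~\ref{lemma2} requires $\k[t]$ to be projective over $\k[t]^{\mathcal A_n}$, which fails because $\mathcal A_n$ contains no reflections; so the paper's treatment of the $\mathcal A_n$ case is itself elliptical at precisely the point you identified. Your diagnosis of the obstruction is sound even though your proposed cure is not.
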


Theorem \ref{thm-main1} implies the important property of lifting of each maximal ideal $\bf m$ of $\Gamma$ to  finitely many left maximal ideals of $A_{n}^W$ containing $\bf m$ \cite{Futorny2}
for all groups $W$ listed in Theorem \ref{thm-main2}.
Moreover, 
   we  show the lifting of maximal ideals for $A_n(\k)^G$ for an arbitrary group $G=G(m,p,n)$ in Theorem \ref{thm-lifting}.

Let $\tilde{A_{n}}(\mathbb C)$ be the localization of $A_{n}$ by the multiplicative set generated by $\{x_i | i=1, \ldots, n\}$. 

We have our third main result

\begin{theorem}\label{thm-main3}
Let $U=\tilde{A_{n}}(\mathbb C)^W$, where $W \in \{S_n, B_n, D_n \}$. Then $U$ is a Galois order over a Harish-Chandra subalgebra $\Gamma=\mathbb C [t_1,\ldots,t_n]^W$. Moreover,
$U$ is free as a right (left) $\Gamma$-module.
\end{theorem}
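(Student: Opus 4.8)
The plan is to realize $\tilde{A_n}(\mathbb C)$ as a crossed product over $\Gamma=\mathbb C[t_1,\dots,t_n]$, with $t_i=x_i\partial_i$, and then to pass to $W$-invariants using the general results on invariant subalgebras of Galois orders from Section~\ref{sec-Galois}. After inverting the $x_i$ we may write $\partial_i=x_i^{-1}t_i$, and the relation $t_ix_i=x_i(t_i+1)$ gives $x_it_ix_i^{-1}=t_i-1$; hence $\tilde{A_n}(\mathbb C)\cong\Gamma\ast\mathbb Z^n$, the skew Laurent algebra in which $X^{e_i}=x_i$ acts on $\Gamma$ by the unit shift $\sigma_i\colon t_i\mapsto t_i-1$ (and fixes $t_j$, $j\neq i$). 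This sits inside $K\ast\mathbb Z^n$ with $K=\mathbb C(t_1,\dots,t_n)$. Being literally a crossed product of the Noetherian integrally closed domain $\Gamma$ by a faithful $\mathbb Z^n$-action, $\tilde{A_n}(\mathbb C)$ is a Galois order over $\Gamma$; this also follows by localizing the rank-one case of Theorem~\ref{thm-GWA} at the $x_i$ and iterating, the localization only simplifying matters by making the crossed-product structure manifest.

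Next I would pin down the $W$-action and check that it is compatible with this Galois datum. The group $S_n$ permutes the pairs $(x_i,\partial_i)$, hence permutes the $t_i$ and the coordinates of $\mathbb Z^n$; for $B_n$ and $D_n$ one adjoins the sign changes $\tau_i\colon x_i\mapsto x_i^{-1}$, which on the torus induce $\partial_i\mapsto-x_i^2\partial_i$ and therefore $t_i\mapsto-t_i$. A direct computation gives $\tau_i\sigma_i\tau_i^{-1}=\sigma_i^{-1}$, so $W$ acts on the monoid $\mathbb Z^n$ by signed permutations, consistently with its action on $K$; since the representation of $W$ on $\mathrm{span}(t_1,\dots,t_n)$ is faithful, $K/K^W$ is Galois with group $W$ and $K^W=\Frac(\Gamma^W)$. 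With these compatibilities in hand, the result of Section~\ref{sec-Galois} applies and yields that $\Gamma^W=\mathbb C[t_1,\dots,t_n]^W$ is a Harish-Chandra subalgebra of $U=\tilde{A_n}(\mathbb C)^W$ and that $U$ is a Galois order over $\Gamma^W$, the Harish-Chandra and order conditions being inherited through the finite extension $\Gamma^W\subseteq\Gamma$.

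For freeness I would use that $W$ acts on $\mathrm{span}(t_1,\dots,t_n)$ as a finite reflection group, so by Chevalley--Shephard--Todd $\Gamma^W$ is a polynomial ring and $\Gamma$ is free over it. Decomposing $\tilde{A_n}(\mathbb C)=\bigoplus_{g\in\mathbb Z^n}\Gamma X^g$ and grouping by $W$-orbits $O\subseteq\mathbb Z^n$, the invariants split as $U=\bigoplus_O U_O$, where for a chosen representative $g_0$ of $O$ with stabilizer $W_{g_0}$ one has $U_O\cong\Gamma^{W_{g_0}}X^{g_0}$, the coefficient at $g_0$ being forced to lie in $\Gamma^{W_{g_0}}$. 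By Steinberg's theorem the point stabilizers $W_{g_0}$ are themselves reflection groups, so each $\Gamma^{W_{g_0}}$ is a polynomial ring and free over $\Gamma^W$; as an arbitrary direct sum of free modules is free, $U$ is then free over $\Gamma^W$. \textbf{The hard part will be} exactly here: the right $\Gamma^W$-module structure on each $U_O$ is twisted by $\sigma^{g_0}$, which is only an affine (not graded) automorphism of $\Gamma$, so the degree grading cannot be used directly; one must verify that twisting by $\sigma^{g_0}$ carries the free $\Gamma^W$-module $\Gamma^{W_{g_0}}$ to a free one (using $\sigma^{g_0}(\Gamma^W)\subseteq\Gamma^{W_{g_0}}$, which I would check via the stabilizer computation above) and that freeness is preserved upon summing over the infinitely many orbits. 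The same mechanism that yields freeness in the affine case (Theorem~\ref{thm-main2}) should then apply once this twist is accounted for.
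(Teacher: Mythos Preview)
Your argument for the Galois order structure follows the same template as the paper: realize $\tilde{A_n}$ as $\Gamma*\mathbb Z^n$, check that the $W$-action is compatible with the skew monoid ring data, and then invoke the machinery of Section~\ref{sec-Galois}. One point you gloss over is the verification that $U=\tilde{A_n}^W$ is a Galois \emph{ring} over $\Gamma^W$ inside $(K*\mathbb Z^n)^W$; Lemma~\ref{lemma1} presupposes this. The paper simply quotes Theorem~\ref{thm-ring} (i.e.\ \cite{Futorny}, Propositions~7.3--7.4) for that step; you should either cite it or exhibit $W$-invariant elements whose supports generate $\mathbb Z^n$. (Incidentally, your convention $t_i=x_i\partial_i$ gives $\tau_i(t_i)=-t_i$ on the nose; the paper takes $t_i=\partial_i x_i$, gets $\varepsilon_i(t_i)=2-t_i$, and then shifts to $t_i'=1-t_i$ to recover the reflection action. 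Same content, different bookkeeping.)

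Where you genuinely diverge from the paper is in the freeness argument. The paper does \emph{not} decompose by $W$-orbits or invoke Steinberg. Instead it observes that $\tilde{A_n}$ is free over $\mathbb C[t_1,\dots,t_n]$, that $\mathbb C[t_1,\dots,t_n]$ is free over $\Gamma^W$ by Chevalley--Shephard--Todd, and then applies Lemma~\ref{lemma2}: the Reynolds operator splits $U\hookrightarrow\tilde{A_n}$ as $\Gamma^W$-modules, so $U$ is a direct summand of a projective $\Gamma^W$-module, hence projective. Proposition~\ref{useit2} then upgrades Galois ring to Galois order, and freeness follows from Bass's theorem \cite{Bass} (a non--finitely-generated projective module over a polynomial ring is free). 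This route is considerably shorter and avoids the twist issue you flag.

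Your orbit-by-orbit approach is nonetheless workable, and the ``hard part'' dissolves once you note that $\sigma^{g_0}$ commutes with every $w\in W_{g_0}$ (since $w(g_0)=g_0$ implies $w\circ\sigma^{g_0}=\sigma^{g_0}\circ w$ on $\Gamma$). Hence the automorphism $\sigma^{-g_0}$ of $\Gamma$ carries the inclusion $\sigma^{g_0}(\Gamma^W)\subseteq\Gamma^{W_{g_0}}$ isomorphically to $\Gamma^W\subseteq\Gamma^{W_{g_0}}$, and freeness of the latter is standard reflection-group theory. So your argument closes, and its payoff is an explicit $\Gamma^W$-basis of $U$ indexed by orbit data; the paper's argument is quicker but non-constructive.
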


\

\noindent{\bf Acknowledgements.}  V.F. is
supported in part by  CNPq grant (200783/2018-1) and by 
Fapesp grant (2014/09310-5).  J.S. is supported in part by Fapesp grants (2014/25612-1)  and  (2016/14648-0). J.S. is grateful to the University of Cologne for hospitality during his visit, when 
 part of the work was done, and to Prof. Igor Burban for his interest and attention.

\section{Galois orders}\label{sec-Galois}

All rings and fields considered will be algebras over a base field $\k$ which is algebraically closed of characteristic 0.

Let $R$ be a ring,  $\mathfrak{M}\subset Aut_{\k}(R) $ a monoid and  $R*\mathfrak{M}$ the corresponding skew monoid ring.
If $G$ is a finite group acting on $\mathfrak{M}$ by conjugation then we have an action of $G$ on $R*\mathfrak{M}$ as follows: $g(rm)=g(r)g(m), g \in G, r \in R, m \in \mathfrak{M}$. 
 We denote the ring of invariants by the action of $G$ by $(R*\mathfrak{M})^G$.

Any element  $x\in R*\mathfrak{M}$ can  be written in the form $x = \sum_{m \in \mathfrak{M}} x_m m$. We define $supp \, x$ as the set of $m\in \mathfrak{M}$ such as $x_m\neq 0$.



Let  $L$ be a finite Galois extension of a field $K$ with the Galois group $G=Gal(L,K)$. Let $\mathfrak{M}\subset Aut_{\k}(L) $ be a monoid 
having  the following property $(\Xi)$: if $m,m' \in \mathfrak{M}$ and their restrictions to $K$ coincide, then $m=m'$.

\begin{definition} Let $\Gamma$ be a commutative domain, $K$ is the field of fractions of $\Gamma$.
A finitely generated $\Gamma$-ring $U$ embedded in $(L*\mathfrak{M})^G$ is called a \emph{Galois ring over $\Gamma$} if $KU=KU=(L*\mathfrak{M})^G$.
\end{definition}

Set $\mathfrak{K}=(L*\mathfrak{M})^G$. We have the following  characterization of Galois rings.

\begin{proposition} \label{useit} Let $\Gamma$ be a commutative domain, and $K$ the field of fractions of $\Gamma$.
\begin{itemize}
\item[(i)]
\cite[Proposition 4.1]{Futorny}
Assume that a $\Gamma$-ring $U$ $\subset \mathfrak{K}$ is generated by $u_1,\ldots, u_k$.
If $\bigcup_{i=1}^k supp \, u_i$ generates $\mathfrak{M}$ as a monoid, then $U$ is a Galois ring  over $\Gamma$. In particular, if $LU = L * \mathfrak{M}$ then $U$ is a Galois ring  over $\Gamma$.
\item[(ii)] \cite[Theorem 4.1]{Futorny}
Let $U$ be a Galois ring over $\Gamma$ and $S=\Gamma\setminus \{0\}$. Then $S$ is a left and right Ore  set, and the localization of $U$ by $S$ both on the left and on the right is isomorphic to $\mathfrak{K}$.
\end{itemize}
\end{proposition}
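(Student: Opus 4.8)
The plan is to deduce both parts from a single $L$-linear reformulation of the Galois condition. For part~(i) I would first record the descent principle that makes the ``in particular'' clause transparent: since $L/K$ is Galois with group $G$ and $L^{G}=K$, the skew monoid ring $L*\mathfrak{M}$ is an $L$-vector space carrying a semilinear $G$-action with $(L*\mathfrak{M})^{G}=\mathfrak{K}$, so Galois descent gives a canonical isomorphism $L\otimes_{K}\mathfrak{K}\xrightarrow{\ \sim\ }L*\mathfrak{M}$. For a $K$-subspace $KU\subseteq\mathfrak{K}$ this identifies $L\otimes_{K}(KU)$ with the $L$-span $LU\subseteq L*\mathfrak{M}$, and faithful flatness of $L$ over $K$ yields the equivalence
\[
KU=\mathfrak{K}\iff LU=L*\mathfrak{M},
\]
with the symmetric statement for $UK$. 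Thus the whole problem reduces to proving $LU=L*\mathfrak{M}$, and the ``in particular'' clause is exactly this reduction.

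The heart of the matter is a separation lemma: for any $w\in LU$, every monomial occurring in $\supp w$ already lies in $LU$. Here I would use that $LU$ is a right $U$-module (indeed $LU\cdot U=L(U\cdot U)\subseteq LU$), hence stable under right multiplication by $\Gamma\subseteq K\cdot e$. Writing $w=\sum_{m}a_{m}m$ with $a_{m}\in L$, the skew relation $m\gamma=m(\gamma)m$ gives $w\gamma=\sum_{m}a_{m}\,m(\gamma)\,m$, so right multiplication by $\gamma\in\Gamma$ rescales the coefficient of $m$ by the scalar $m(\gamma)$. Property $(\Xi)$ says distinct elements of $\mathfrak{M}$ restrict to distinct embeddings of $K$; since $\supp w$ is finite and the base field is infinite, I can choose a single $\gamma\in\Gamma$ for which the values $\{m(\gamma):m\in\supp w\}$ are pairwise distinct. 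The family $\{w\gamma^{k}\}_{k\ge 0}\subseteq LU$ is then governed by a Vandermonde matrix in these distinct scalars, so inverting it expresses each $a_{m}m$ as an $L$-combination of the $w\gamma^{k}$; hence $a_{m}m\in LU$ and, dividing by $a_{m}\in L^{\times}$, $m\in LU$.

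With the separation lemma in hand I would propagate through the monoid by induction on word length in the generating set $\mathfrak{M}_{0}=\bigcup_{i}\supp u_{i}$. The base case is $e\in\Gamma\subseteq LU$. For the inductive step, suppose $m\in LU$ and let $n\in\mathfrak{M}_{0}$, say $n\in\supp u_{i}$. Because $m$ is invertible, left multiplication by $m$ on $u_{i}$ produces no cancellation: $\supp(m\,u_{i})=m\cdot\supp u_{i}$ with all coefficients nonzero, while $m\,u_{i}\in LU\cdot U\subseteq LU$. Applying the separation lemma to $m\,u_{i}$ shows $mn\in LU$. Since $\mathfrak{M}_{0}$ generates $\mathfrak{M}$, every $m\in\mathfrak{M}$ is reached, so all basis monomials lie in $LU$ and therefore $LU=L*\mathfrak{M}$; the symmetric argument with left multiplication by $U$ gives $UL=L*\mathfrak{M}$. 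Combined with the descent reduction this proves part~(i). I expect the control of cancellation to be the only delicate point, and the reason for routing everything through right multiplication by invertible $m$, rather than through products inside $U$ where cancellation is uncontrolled, is precisely to sidestep it.

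For part~(ii) the key observation is that every $s\in S$ is already invertible in $\mathfrak{K}$: indeed $s\in\Gamma\subseteq U$ while $s^{-1}\in K\subseteq\mathfrak{K}$. Given $u\in U$ and $s\in S$, I would use $\mathfrak{K}=UK$ to write $s^{-1}u=\sum_{j}u_{j}c_{j}$ with $u_{j}\in U$ and $c_{j}\in K$; taking a common denominator over the commutative $\Gamma$ gives $s^{-1}u=w\,b^{-1}$ with $w\in U$ and $b\in S$, i.e.\ $ub=sw$, which is the right Ore condition. The symmetric computation with $\mathfrak{K}=KU$ yields the left Ore condition. Consequently $S^{-1}U$ and $US^{-1}$ exist, and since the elements of $S$ are invertible in $\mathfrak{K}$ the universal property embeds both into $\mathfrak{K}$; their images are $KU=\mathfrak{K}$ and $UK=\mathfrak{K}$ respectively, giving the asserted isomorphisms.
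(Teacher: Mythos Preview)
The paper itself does not prove this proposition: both parts are simply quoted from \cite{Futorny} (Proposition~4.1 and Theorem~4.1 there), so there is no in-paper argument to compare against. Your write-up supplies a complete and essentially correct proof.

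For part~(i), your Galois-descent reduction $KU=\mathfrak{K}\iff LU=L*\mathfrak{M}$ is the clean way to handle the ``in particular'' clause, and your separation lemma via a Vandermonde system in the eigenvalues $m(\gamma)$ is exactly the mechanism used in \cite{Futorny}. The only place to be explicit is the existence of a single $\gamma\in\Gamma$ separating all $m\in\supp w$: since the automorphisms are $\k$-linear and $\k$ is infinite, each equalizer $\{\gamma:m(\gamma)=m'(\gamma)\}$ is a proper $\k$-subspace of $\Gamma$, and a vector space over an infinite field is not a finite union of proper subspaces. Your inductive propagation along words in $\mathfrak{M}_0$, using that left multiplication by an invertible $m$ cannot create cancellation in the support, is correct and mirrors the original argument.

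For part~(ii), deriving the Ore condition from the equalities $\mathfrak{K}=UK=KU$ together with the fact that every $s\in S$ is already invertible in $\mathfrak{K}$ is the standard route and is correct; the identification of the localization with $\mathfrak{K}$ then follows from the universal property as you state. In short, your proof is sound and follows the same line as the cited reference.
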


An important class of Galois rings is given by Galois orders.\\

\begin{definition}
A Galois ring over $\Gamma$ is called a \emph{right (left) Galois order over $\Gamma$} if for every right (left) finite dimensional $K$-vector subspace $W \subset \mathfrak{K}$, $W \cap \Gamma$ is a finitely generated right (left) $\Gamma$-module.
If $U$  is both left and right Galois order over $\Gamma$, then we say that $U$ is a \emph{Galois order over $\Gamma$}.
\end{definition}

We will often use the following non-commutative extension of the Hilbert-Noether's Theorem:

\begin{theorem} \label{thm-M-S}\cite{Montgomery}
Let $U$ be a finitely generated left and right Noetherian $\k$-algebra. If $G$ 
is a finite group of automorphisms of $U$ then $U^G$ is a finitely generated $\k$-algebra.
\end{theorem}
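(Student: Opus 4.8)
The plan is to use that $\mathrm{char}\,\k=0$, so that $|G|$ is invertible and a Reynolds operator is available. I would set $\pi\colon U\to U^G$, $\pi(u)=\frac{1}{|G|}\sum_{g\in G}g(u)$. Since every $g$ is an algebra automorphism fixing $U^G$ pointwise, $\pi$ is a projection onto $U^G$ and a map of $U^G$-bimodules, $\pi(aub)=a\,\pi(u)\,b$ for $a,b\in U^G$ and $u\in U$. One cheap dividend is that $U^G$ is left and right Noetherian: for a right ideal $I\subseteq U^G$ one has $\pi(IU)=I$ (indeed $a=\pi(a\cdot 1)\in\pi(IU)$ for $a\in I$, while $\pi\bigl(\sum a_ku_k\bigr)=\sum a_k\pi(u_k)\in I$), so $I\mapsto IU$ is injective and the ascending chain condition descends from $U$; the left case is symmetric. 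This already indicates how the splitting transports finiteness from $U$ down to $U^G$.

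The core reduction is that finite generation of $U^G$ as a $\k$-algebra follows from finite generation of $U$ as a left $U^G$-module. Granting the latter, write $U=\sum_{i=1}^t U^G w_i$ with $w_1=1$ and pick algebra generators $y_1,\dots,y_s$ of $U$. Expanding each $w_iy_l\in U$ in the module generators gives $w_iy_l=\sum_j c^{il}_j w_j$ with $c^{il}_j\in U^G$. Let $B_0\subseteq U^G$ be the affine $\k$-subalgebra generated by the finitely many $c^{il}_j$. Then $N:=\sum_i B_0 w_i$ contains $1=w_1$ and is stable under right multiplication by each $y_l$, since $(bw_i)y_l=\sum_j(bc^{il}_j)w_j\in N$; hence $N$ contains every monomial in the $y_l$, so $N=U$. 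Applying $\pi$ and using $B_0\subseteq U^G$ gives $U^G=\pi(U)=\sum_i B_0\,\pi(w_i)$, a finitely generated left $B_0$-module. Finally the $\k$-subalgebra $C\subseteq U^G$ generated by the generators of $B_0$ together with $\pi(w_1),\dots,\pi(w_t)$ satisfies $U^G=\sum_iB_0\pi(w_i)\subseteq C\subseteq U^G$, so $U^G=C$ is a finitely generated $\k$-algebra. This step is elementary once module-finiteness is in hand.

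The main obstacle is thus proving that $U$ is a finitely generated left (and, by the symmetric argument, right) $U^G$-module; here the Noetherian and affine hypotheses are genuinely used, and non-centrality of $U^G$ blocks the usual commutative integral-dependence proof. I would pass to the skew group algebra $S=U*G$, which is free of rank $|G|$ over $U$ and hence affine and left and right Noetherian. The averaging idempotent $e=\frac{1}{|G|}\sum_{g\in G}g\in S$ satisfies $eSe\cong U^G$, and a short computation identifies $Se$ with $U$ as a $(U,U^G)$-bimodule, the right $U^G$-action being ordinary multiplication. Finite generation of $U$ over $U^G$ then amounts to finiteness of the corner module $Se$ over $eSe$, extracted from the Noetherian and affine structure of $S$. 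Making this last finiteness rigorous is the technical heart of the statement and the step I expect to require the most care; it is precisely the input provided by \cite{Montgomery}, after which the two preceding paragraphs assemble the conclusion.
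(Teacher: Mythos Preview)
The paper does not prove this result; it is quoted from Montgomery--Small with only the citation \cite{Montgomery}. Your outline is in fact the skeleton of their argument, and your first two paragraphs---Noetherianity of $U^G$ via the Reynolds splitting, and the Artin--Tate--type reduction to module-finiteness of $U$ over $U^G$---are correct and cleanly written.

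The problem is your third paragraph. You set up the skew group ring $S=U*G$ and the idempotent $e$, correctly identify $Se\cong U$ as a right $U^G\cong eSe$-module, and then declare that finite generation of $Se$ over $eSe$ ``is precisely the input provided by \cite{Montgomery}.'' But that is the theorem you were asked to prove, so as written the argument is circular: you have reduced Montgomery--Small to Montgomery--Small. The missing step is short and you already have every ingredient for it. If $S$ is right Noetherian and $e\in S$ is idempotent, then $Se$ is finitely generated as a right $eSe$-module: for any right $eSe$-submodule $M\subseteq Se$ one has $M=Me$, and for $m\in M$, $s\in S$ one computes $mse=(me)se=m(ese)\in M\cdot(eSe)\subseteq M$, so $MSe\subseteq M$; together with $M=Me\subseteq MSe$ this gives $M=(MS)e$, whence $M\mapsto MS$ embeds the lattice of right $eSe$-submodules of $Se$ into the right ideals of $S$. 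The ascending chain condition on the latter forces $Se$ to be a Noetherian, hence finitely generated, right $eSe$-module. Since $S=U*G$ is right Noetherian and $Se\cong U$ over $eSe\cong U^G$, this yields that $U$ is module-finite over $U^G$ on the right (and symmetrically on the left), after which your Artin--Tate paragraph finishes the proof. With this four-line lemma inserted, your sketch is complete and coincides with the Montgomery--Small argument.
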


The following useful properties of Galois orders were established in \cite{Futorny}.

\begin{proposition}
\label{useit2} Let $\Gamma$ be a commutative Noetherian domain.
If $U$ is a Galois ring over $\Gamma$ and $U$ is a left (right) projective $\Gamma$-module, then $U$ is a left (right) Galois order over $\Gamma$.
\end{proposition}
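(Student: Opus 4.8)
I treat the right-hand statement; the left case is symmetric. Throughout write $S=\Gamma\setminus\{0\}$, and recall that $U$ is by definition a subring of $\mathfrak{K}=(L*\mathfrak{M})^G$. The plan is to use projectivity to realize $U$ as a direct summand of a free $\Gamma$-module, to transport a given finite-dimensional $K$-subspace of $\mathfrak{K}$ into finitely many coordinates of the associated free $K$-space, and then to invoke Noetherianity of $\Gamma$ to get finite generation.

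First I would set up the relevant localization. By Proposition \ref{useit}(ii) the set $S$ becomes invertible in $\mathfrak{K}$, so $K=S^{-1}\Gamma\subseteq\mathfrak{K}$ and the right-multiplication map $\mu\colon U\otimes_\Gamma K\to\mathfrak{K}$, $u\otimes k\mapsto uk$, is a well-defined right $K$-linear map. It is surjective because $UK=\mathfrak{K}$. For injectivity, a common-denominator argument (legitimate since $\Gamma$ is commutative) shows every element of $U\otimes_\Gamma K$ has the form $u\otimes s^{-1}$ with $u\in U$, $s\in S$; if $\mu(u\otimes s^{-1})=us^{-1}=0$, then $u=(us^{-1})s=0$ in $\mathfrak{K}$, whence $u\otimes s^{-1}=0$. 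Thus $\mu\colon U\otimes_\Gamma K\xrightarrow{\ \sim\ }\mathfrak{K}$ is an isomorphism.

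Next I would use projectivity. Choose a $\Gamma$-module $P$, an index set $I$, and a splitting $\iota\colon U\hookrightarrow\Gamma^{(I)}$, $\pi\colon\Gamma^{(I)}\twoheadrightarrow U$ with $\pi\iota=\id_U$. Tensoring the split monomorphism $\iota$ with $K$ keeps it a split monomorphism, so $\iota\otimes\id_K\colon U\otimes_\Gamma K\hookrightarrow\Gamma^{(I)}\otimes_\Gamma K=K^{(I)}$ is injective; composing with $\mu^{-1}$ gives an injective right $K$-linear map $\Phi\colon\mathfrak{K}\hookrightarrow K^{(I)}$ whose restriction to $U$ is exactly $\iota$, since $\Phi(u)=(\iota\otimes\id_K)(u\otimes 1)=\iota(u)\in\Gamma^{(I)}$. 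Now let $W\subseteq\mathfrak{K}$ be a finite-dimensional right $K$-subspace. Its image $\Phi(W)$ is finite-dimensional in $K^{(I)}=\bigoplus_{i\in I}K$; choosing a basis and taking the union of the finite supports of its members yields a finite subset $J\subseteq I$ with $\Phi(W)\subseteq\bigoplus_{j\in J}K$. For $x\in W\cap U$ we have both $\Phi(x)=\iota(x)\in\iota(U)\subseteq\Gamma^{(I)}$ and $\Phi(x)\in\Phi(W)$, hence $\Phi(x)\in\Gamma^{(I)}\cap\bigl(\bigoplus_{j\in J}K\bigr)=\bigoplus_{j\in J}\Gamma$, a free $\Gamma$-module of rank $|J|$. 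Therefore $\iota(W\cap U)$ is a submodule of a finitely generated module over the Noetherian ring $\Gamma$, hence finitely generated; as $\iota$ is injective, $W\cap U$ is a finitely generated right $\Gamma$-module. Since $W$ was arbitrary, $U$ is a right Galois order over $\Gamma$.

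The step I expect to carry the real weight is the reduction to finitely many coordinates: it is precisely the direct-summand nature of projectivity that lets me place the whole of $U$ inside an honest free $\Gamma$-module $\Gamma^{(I)}$, so that a finite-dimensional $K$-subspace meets $U$ inside a finite-rank free summand. Mere torsion-freeness or flatness would still embed $U$ into a $K$-vector space, but would not by itself confine $W\cap U$ to a finitely generated $\Gamma$-module; this is where the projectivity hypothesis is genuinely used. The isomorphism $\mu$ and the support argument are then routine.
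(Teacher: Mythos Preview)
The paper does not actually prove this proposition: it is merely quoted from \cite{Futorny} (Futorny--Ovsienko), so there is no ``paper's own proof'' to compare against. Your argument is correct and is essentially the standard one: embed $U$ as a $\Gamma$-direct summand of a free module $\Gamma^{(I)}$, localize to identify $\mathfrak{K}\simeq U\otimes_\Gamma K\hookrightarrow K^{(I)}$, note that any finite-dimensional $K$-subspace $W$ lands in finitely many coordinates, and conclude that $W\cap U$ sits inside a finite-rank free $\Gamma$-module, hence is finitely generated by Noetherianity. Each step is justified; in particular the injectivity of $\mu$ follows cleanly from the fact that $U$ is a subring of the domain $\mathfrak{K}$, and the identity $\Gamma^{(I)}\cap\bigoplus_{j\in J}K=\bigoplus_{j\in J}\Gamma$ is immediate coordinatewise. (Note that the definition as printed in the paper reads ``$W\cap\Gamma$'', which is evidently a misprint for ``$W\cap U$''; you have interpreted it correctly.)
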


A concept  of the Harish-Chandra subalgebra was introduced in \cite{DFO}. 

\begin{definition}
 Let $U$ be an associative algebra and $\Gamma$ a commutative subalgebra of $U$. We  say that $\Gamma$ \emph{ is a Harish-Chandra subalgebra} if for every $u \in U$, $\Gamma u \Gamma$ is finitely generated left and right $\Gamma$-module.
\end{definition}

The property of $\Gamma$ to be a Harish-Chandra subalgebra is essential for the theory of Galois orders. Indeed, as it was shown in \cite{Futorny}, if $U$ is a Galois order over a commutative Noetherian domain  $\Gamma$  then $\Gamma$ is a Harish-Chandra subalgebra of $U$. This property of $\Gamma$ guarantees an 
effective representation theory of  Galois orders  over $\Gamma$ (cf. \cite{Futorny2}).

We will always assume that $\Gamma$ is a commutative domain and a $\k$-algebra.
The following is our key lemma.

\begin{lemma} \label{lemma1}
Let $\Gamma$ be a Noetherian $\k$-algebra, $U\subset \mathfrak{K}$ a Galois order over $\Gamma$. If $H$ is a finite group of automorphisms of $U$ such that $H(\Gamma) \subset \Gamma$, and $U^H$ is a Galois ring over $\Gamma^H$ in some invariant skew monoid ring $\mathfrak{K}^* \subset \mathfrak{K}$, then $U^H$ is in fact a Galois order over $\Gamma^H$.
\end{lemma}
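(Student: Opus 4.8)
The plan is to verify directly the defining finiteness condition of a Galois \emph{order} for $U^H$ over $\Gamma^H$, bootstrapping it from the corresponding condition for $U$ over $\Gamma$; since the hypotheses already give that $U^H$ is a Galois \emph{ring} over $\Gamma^H$ inside $\mathfrak{K}^*\subset\mathfrak{K}$, only the intersection property has to be checked. Write $K$ for the fraction field of $\Gamma$ and $K'$ for the fraction field of $\Gamma^H$. Two pieces of commutative invariant theory are needed at the outset. Because $H$ is a finite group with $H(\Gamma)\subseteq\Gamma$, in fact $H(\Gamma)=\Gamma$, so $H$ acts on $\Gamma$; by the commutative Hilbert--Noether theorem (the classical analogue of Theorem \ref{thm-M-S}), $\Gamma$ is module-finite over $\Gamma^H$ and $\Gamma^H$ is again a Noetherian $k$-algebra. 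Consequently $K'=K^H$ and the field extension $K/K'$ is finite, of degree at most $|H|$. These two finiteness facts are where the argument really rests.

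Now I fix a finite-dimensional right $K'$-subspace $V\subseteq\mathfrak{K}^*$, say with $K'$-basis $v_1,\dots,v_r$. Since $\mathfrak{K}^*\subset\mathfrak{K}$ and $K'\subseteq K$, the right $K$-span $W:=VK=\sum_{i=1}^r v_iK$ is a right $K$-subspace of $\mathfrak{K}$ of dimension at most $r$, hence finite-dimensional over $K$. Applying the Galois order property of $U$ over $\Gamma$ to $W$, the intersection $M:=W\cap U$ is a finitely generated right $\Gamma$-module.

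It remains to descend from $\Gamma$ to $\Gamma^H$. As $\Gamma$ is module-finite over $\Gamma^H$, the finitely generated right $\Gamma$-module $M$ is finitely generated over $\Gamma^H$ as well, and since $\Gamma^H$ is Noetherian, $M$ is a Noetherian right $\Gamma^H$-module. The set $V\cap U^H$ is contained in $W\cap U=M$ (as $V\subseteq W$ and $U^H\subseteq U$), and it is a right $\Gamma^H$-submodule of $M$, because $V$ is a right $K'$-space and $U^H$ is a ring containing $\Gamma^H$; being a submodule of a Noetherian $\Gamma^H$-module, it is finitely generated over $\Gamma^H$. This establishes the right Galois order condition, and the left case is identical with $KV$ in place of $VK$. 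Hence $U^H$ is a Galois order over $\Gamma^H$.

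The only genuinely non-formal inputs are the finiteness statements of the first paragraph, and I expect the module-finiteness of $\Gamma$ over $\Gamma^H$ to be the main point to secure; everything afterwards is a transfer of the already-available Galois order property of $U$ through the finite extension $K/K'$, using that $\mathfrak{K}^*\subset\mathfrak{K}$ lets every finite-dimensional $K'$-subspace of $\mathfrak{K}^*$ be enlarged to a finite-dimensional $K$-subspace of $\mathfrak{K}$.
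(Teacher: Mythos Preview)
Your proof is correct and follows essentially the same approach as the paper: enlarge a finite-dimensional $K'$-subspace of $\mathfrak{K}^*$ to a finite-dimensional $K$-subspace of $\mathfrak{K}$ using $[K:K']\le |H|$, apply the Galois order property of $U$ over $\Gamma$, descend to $\Gamma^H$ via Hilbert--Noether, and conclude that the relevant $\Gamma^H$-submodule is finitely generated. The only difference is that you are slightly more explicit about why $V\cap U^H$ is a $\Gamma^H$-submodule and about the Noetherianity of $\Gamma^H$.
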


\begin{proof}
Let $K=\Frac \, \Gamma$ be the field of fractions of $\Gamma$. Consider a finite dimensional vector $K$-subspace $W$  in $\mathfrak{K}$. Since $U$ is a Galois order over $\Gamma$ we have that $W \cap U$ is a finitely generated  right (left) $\Gamma$-module.   On the other hand, $\Gamma^H$ is a Noetherian algebra and $\Gamma$ is a finitely generated $\Gamma^H$-module  
by the Hilbert-Noether Theorem. So $W \cap U$ is finitely generated  right (left) $\Gamma^H$-module. Now, let $Y$ be a finite dimensional vector space over $K'= \Frac \, \Gamma^H$ in $\mathfrak{K}^*$. Let $Z$ be the $K$-vector space generated by $Y$ in $\mathfrak{K}$. It is finite dimensional as $[K:K']=|H|$. Then $Z \cap U$ is a finitely generated right (left) $\Gamma^H$-module. Since $Y \cap U^H$ is a $\Gamma^H$-submodule of $Z \cap U$, we conclude that $Y \cap U^H$ is finitely generated right (left) $\Gamma^H$-module and, hence, $U^H$ is a Galois order over $\Gamma^H$.
\end{proof}

We also have

\begin{lemma} \label{lemma2} Let $U$ be an associative algebra and $\Gamma\subset U$ a Noetherian commutative subalgebra. 
Let $H$ be a finite group of automorphisms of $U$ such that $H(\Gamma) \subset \Gamma$.
 If  $U$ is projective right (left)  $\Gamma$-module and $\Gamma$ is projective over $\Gamma^H$, then $U^H$ is projective  right (left) $\Gamma^H$-module.
\end{lemma}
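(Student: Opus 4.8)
The plan is to exploit the characteristic-zero hypothesis, which makes $|H|$ invertible in $\k$, so that the averaging (Reynolds) operator $e = \frac{1}{|H|}\sum_{h\in H} h$ is available. I would first observe that $e$ is an idempotent $\k$-linear map $U \to U$ whose image is exactly $U^H$ and which restricts to the identity on $U^H$; hence $e$ is a projection of $U$ onto the submodule $U^H$. The whole argument then reduces to two facts: that $U$ is projective as a right $\Gamma^H$-module, and that $e$ is right $\Gamma^H$-linear, so that $U^H$ becomes a direct summand of $U$ as a right $\Gamma^H$-module and therefore inherits projectivity.

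For the first fact I would use transitivity of projectivity along the tower $\Gamma^H \subset \Gamma \subset U$. By hypothesis $U$ is a direct summand of a free right $\Gamma$-module $\Gamma^{(I)}$. Restricting scalars to $\Gamma^H$, the module $\Gamma^{(I)}$ is a direct sum of copies of $\Gamma$, and since $\Gamma$ is projective over $\Gamma^H$ by hypothesis, $\Gamma^{(I)}$ is projective over $\Gamma^H$; as a direct summand of a projective module is again projective, $U$ is projective over $\Gamma^H$.

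For the second fact, the key check is the right $\Gamma^H$-linearity of $e$, and this is where both hypotheses on $H$ enter. Since $H(\Gamma) \subseteq \Gamma$, every $h \in H$ is a ring automorphism preserving $\Gamma$, and since the elements of $\Gamma^H$ are by definition $H$-fixed, for $\gamma \in \Gamma^H$ and $u \in U$ one has $h(u\gamma) = h(u)h(\gamma) = h(u)\gamma$, whence $e(u\gamma) = e(u)\gamma$ (and symmetrically on the left). Thus $e$ is a retraction of right $\Gamma^H$-modules onto $U^H \subseteq U$, exhibiting $U^H$ as a direct summand of the projective right $\Gamma^H$-module $U$; consequently $U^H$ is projective over $\Gamma^H$. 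The left-module statement is identical with the two sides interchanged. I do not expect any serious obstacle here: the only genuinely load-bearing points are the invertibility of $|H|$, used to define $e$, and the bookkeeping that $e$ respects the $\Gamma^H$-action on the relevant side, both of which are routine once the Reynolds projection is in place.
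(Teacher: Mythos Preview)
Your proof is correct and follows essentially the same approach as the paper: first observe that $U$ is projective over $\Gamma^H$ by transitivity, then use the Reynolds operator $\frac{1}{|H|}\sum_{h\in H}h$ to split the inclusion $U^H\hookrightarrow U$ as $\Gamma^H$-modules. The paper's proof is terser, asserting the first step as ``clear'' and not explicitly checking the $\Gamma^H$-linearity of the splitting, but the substance is identical.
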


\begin{proof}
It is clear that $U$ is a direct summand of a direct sum of free $\Gamma$-modules, and hence a direct summand of a direct sum of free $\Gamma^H$-modules. The canonical embedding of $\Gamma^H$-modules $U^H \rightarrow U$ splits, with the inverse 
$\frac{1}{|H|}\sum_{h \in H}h$. So $U^H$ is a direct summand of a projective $\Gamma^H$-module, and, therefore, projective itself.
\end{proof}

\subsection{Weyl algebras}

Let $A_n=A_n(\k)=\k\langle x_1, \ldots, x_n, \partial_1, \ldots, \partial_n \rangle$ be the $n$-th Weyl algebra over the field $\k$, where 
$\partial_i  x_j-x_j \partial _i=\delta_{ij}$,   $x_ix_j=x_jx_i$, $\partial_i \partial_j=\partial_j \partial_i$ for $1\leq i,j\leq n$. 

For each $i=1, \ldots, n$ denote $t_i=\partial_i x_i$ and consider $\sigma_i\in \Autk \  \k[t_1, \ldots, t_n]$ such that
$\sigma_i(t_j)=t_j-\delta_{ij}$ for all $j=1, \ldots, n$. Let $\mathbb Z^n$ be the free abelian group generated by $\sigma_1, \ldots, \sigma_n$.

 It is well known that $\k [t_1,\ldots,t_n]$ is a maximal commutative algebra of $A_n$, and $A_n$ is a free  right (left) module over $\k [t_1,\ldots, t_n]$.
 

We have a natural embedding $A_n\rightarrow  \k[t_1, \ldots, t_n]*\mathbb Z^n,$
where 
$x_i\mapsto \sigma_i$, $\partial_i \mapsto t_i\sigma_i^{-1}$, $i=1, \ldots, n$.
Let $\Gamma=\k[t_1, \ldots, t_n]$ and $S=\Gamma\setminus \{0\}$. Then  we have

$$A_n[S^{-1}]\simeq \k(t_1, \ldots, t_n)*\mathbb Z^n,$$ and $A_n$ is a Galois order over $\Gamma$ in $\k(t_1, \ldots, t_n)*\mathbb Z^n$ \cite{Futorny}.

\section{Generalized Weyl algebras}\label{sec-GWA}

Let $D$ be a ring, $\sigma=(\sigma_1,\ldots, \sigma_n)$ an $n$-tuple of commuting automorphisms of $D$: $\sigma_i \sigma_j = \sigma_j \sigma_i, \, i,j=1,\ldots,n$. Let $a=(a_1,\ldots, a_n)$ be 
an $n$-tuple of nonzero elements of $Z(D)$, such that $\sigma_i(a_j)=a_j, j \neq i$. The Generalized Weyl algebra $D(a, \sigma)$ \cite{Bavula} of rank $n$ is generated over $D$ by $X_i^+, X_i^-$, $i=1,\ldots,n$ subject to the  relations: 

\[ X_i^+ d = \sigma_i (d) X_i^+; \, X_i^- d= \sigma_i^{-1}(d) X_i^-, \, d \in D, i=1, \ldots , n , \]
\[ X_i^-X_i^+ = a_i; \, X_i^+X_i^- = \sigma_i(a_i), \, i=1 ,\ldots , n \, ,\]
\[ [X_i^-,X_j^+]=[X_i^-,X_j^-]=[X_i^+,X_j^+]=0 \, , i \neq j.\]

We have the following useful observation 

\begin{proposition}\label{prop-tensor}
The tensor product over $\k$ of two Generalized Weyl algebras $$D(a,\sigma) \otimes D(a',\sigma') \simeq (D \otimes D')(a\ast a', \sigma \ast \sigma'), $$ is again a Generalized Weyl algebra,  where $\ast$ is the tensor product of automorphisms.
\end{proposition}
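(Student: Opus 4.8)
The plan is to prove the isomorphism by exhibiting the right-hand side as a Generalized Weyl algebra with explicit data and then constructing mutually inverse algebra homomorphisms. First I would fix notation for the concatenation: if $D(a,\sigma)$ has rank $n$ with data $a=(a_1,\dots,a_n)$ and $\sigma=(\sigma_1,\dots,\sigma_n)$, and $D'(a',\sigma')$ has rank $n'$ with data $a'=(a'_1,\dots,a'_{n'})$ and $\sigma'=(\sigma'_1,\dots,\sigma'_{n'})$, then the proposed target is the Generalized Weyl algebra of rank $n+n'$ over $D\otimes D'$, with central data $a\ast a'=(a_1\otimes 1,\dots,a_n\otimes 1,\, 1\otimes a'_1,\dots,1\otimes a'_{n'})$ and automorphism tuple $\sigma\ast\sigma'=(\sigma_1\otimes\id,\dots,\sigma_n\otimes\id,\,\id\otimes\sigma'_1,\dots,\id\otimes\sigma'_{n'})$. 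Before building the map I would verify that this data is admissible: the automorphisms $\sigma_i\otimes\id$ and $\id\otimes\sigma'_j$ pairwise commute (the first $n$ commute among themselves because the $\sigma_i$ do, similarly for the last $n'$, and any $\sigma_i\otimes\id$ commutes with any $\id\otimes\sigma'_j$ since the factors act on different tensor legs), each element of $a\ast a'$ lies in $Z(D\otimes D')=Z(D)\otimes Z(D')$, and the compatibility $\sigma_k(a_\ell)=a_\ell$ for $k\ne\ell$ is inherited from the two factors together with the cross-leg condition being automatic.

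Next I would define the candidate isomorphism. Both sides are generated over $D\otimes D'$ by the $2(n+n')$ symbols $X_i^{\pm}$ (from the first factor, tensored with $1$) and $Y_j^{\pm}$ (from the second, tensored on the left by $1$). I would send the generators $X_i^{\pm}\otimes 1$ of the tensor product $D(a,\sigma)\otimes D'(a',\sigma')$ to the first $n$ raising/lowering operators of $(D\otimes D')(a\ast a',\sigma\ast\sigma')$, and $1\otimes X_j'^{\pm}$ to the last $n'$ operators, with $D\otimes D'$ mapped identically. The main verification is then purely relational: I would check that the images satisfy the defining relations of the target GWA, and conversely that the target generators satisfy the relations imposed by the tensor product (most crucially that the first block and the second block commute with each other, which on the tensor-product side is the statement that $X_i^{\pm}\otimes 1$ commutes with $1\otimes X_j'^{\pm}$). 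Each of the three families of GWA relations — the $D$-twisting relations, the $X^-X^+=a$ / $X^+X^-=\sigma(a)$ relations, and the commutativity of distinct-index generators — transports directly using the definition of $a\ast a'$ and $\sigma\ast\sigma'$.

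The cleanest way to conclude is a dimension/basis count using the normal form for Generalized Weyl algebras. Each GWA is free as a left (and right) $D$-module with basis the ordered monomials in the $X_i^{\pm}$; hence the tensor product $D(a,\sigma)\otimes D'(a',\sigma')$ is free over $D\otimes D'$ with basis the products of such monomials in the two commuting families, and the target $(D\otimes D')(a\ast a',\sigma\ast\sigma')$ is free over $D\otimes D'$ on exactly the corresponding ordered monomials in $X_1^{\pm},\dots,X_{n+n'}^{\pm}$. Since my homomorphism carries the basis monomials of the source bijectively onto those of the target, it is an isomorphism of $D\otimes D'$-algebras, and the proof is complete.

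I expect the main obstacle to be bookkeeping rather than conceptual: the delicate point is to make the identification of the concatenated data $a\ast a'$ and $\sigma\ast\sigma'$ precise enough that the cross-leg commutativity relations hold \emph{on the nose}, so that the two blocks of generators genuinely commute and the normal-form bases match up. Once the admissibility of $(a\ast a',\sigma\ast\sigma')$ is confirmed and the tensor-leg automorphisms are seen to commute, everything else is a routine relation check, and the freeness over $D\otimes D'$ makes the final bijection of bases immediate.
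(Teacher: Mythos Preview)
The paper does not prove this proposition; it is stated as a ``useful observation'' and left without proof. Your argument is the natural one and is correct: set up the concatenated data, check admissibility, build mutually inverse maps via the universal properties, and confirm bijectivity using the standard $\mathbb{Z}^n$-graded free $D$-module basis of a Generalized Weyl algebra.

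One small imprecision: you write $Z(D\otimes D')=Z(D)\otimes Z(D')$, but equality can fail for general $\k$-algebras. Fortunately you only need the trivial inclusion $Z(D)\otimes Z(D')\subseteq Z(D\otimes D')$, which is enough to ensure that each $a_i\otimes 1$ and $1\otimes a'_j$ is central in $D\otimes D'$. Also, when you say ``ordered monomials in the $X_i^{\pm}$'', make sure you mean the $\mathbb{Z}^n$-indexed basis $X_1^{k_1}\cdots X_n^{k_n}$ with $X_i^{k}=(X_i^{+})^{k}$ for $k\ge 0$ and $X_i^{k}=(X_i^{-})^{-k}$ for $k<0$; mixed products like $X_i^{+}X_i^{-}$ collapse into $D$ and are not basis elements. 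With that reading your basis-matching argument goes through cleanly.
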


We will assume that $D$ is a Noetherian domain which is  finitely generated  $\k$-algebra. In this case the $D(a, \sigma)$ is also a Noetherian domain.
Consider the skew group ring $D*\mathbb{Z}^n$, where the free abelian group $\mathbb{Z}^n$ has a basis $e_1, \ldots, e_n$ and its action on $D$ is defined as follows: $ye_i$ acts as $\sigma_i^y$, for all $i$ and $y \in \mathbb{Z}$.

\begin{proposition}\label{prop-embed-GWA}
There is natural embedding of $D(a, \sigma)$ into $D*\mathbb{Z}^n$, which is an isomorphism if each $a_i$ is a unit in $D$, $i=1, \ldots, n$.
\end{proposition}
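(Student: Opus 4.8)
The plan is to construct the embedding explicitly using the defining relations, and then analyze when it becomes surjective. First I would define a map $\phi: D(a,\sigma) \to D*\mathbb{Z}^n$ on generators by sending $d \mapsto d$ for $d \in D$, $X_i^+ \mapsto e_i$, and $X_i^- \mapsto a_i\, (-e_i)$, where $\pm e_i$ acts on $D$ as $\sigma_i^{\pm 1}$. The key point is to verify that $\phi$ respects all three families of defining relations. For the first family, $\phi(X_i^+)\phi(d) = e_i d = \sigma_i(d) e_i = \phi(\sigma_i(d) X_i^+)$, and similarly $\phi(X_i^-)\phi(d) = a_i(-e_i)d = a_i \sigma_i^{-1}(d)(-e_i) = \sigma_i^{-1}(d)a_i(-e_i)$ using that $a_i \in Z(D)$, which matches $\phi(\sigma_i^{-1}(d)X_i^-)$. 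For the middle relations, $\phi(X_i^-)\phi(X_i^+) = a_i(-e_i)e_i = a_i$, while $\phi(X_i^+)\phi(X_i^-) = e_i a_i(-e_i) = \sigma_i(a_i)e_i(-e_i) = \sigma_i(a_i)$, as required. The commutation relations for $i\neq j$ follow from $\sigma_i(a_j)=a_j$ together with the commutativity $\sigma_i\sigma_j=\sigma_j\sigma_i$ in $\mathbb{Z}^n$.

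Next I would argue that $\phi$ is injective. Since $D(a,\sigma)$ is free as a left $D$-module with a basis of ordered monomials in the $X_i^{\pm}$ (which follows from the Generalized Weyl algebra structure, cf. Bavula), it suffices to check that $\phi$ sends this basis to $D$-linearly independent elements of $D*\mathbb{Z}^n$. A monomial $\prod_i (X_i^+)^{p_i}(X_i^-)^{q_i}$ maps into the graded component $D\cdot(\sum_i (p_i - q_i)e_i)$, and the leading coefficient is a product of the nonzero central elements $a_i$ applied through the relevant automorphisms, hence nonzero since $D$ is a domain. Distinct monomials either land in distinct $\mathbb{Z}^n$-components or, within the same component, reduce to the middle relations; tracking the multidegree shows no nontrivial $D$-linear combination maps to zero, giving injectivity.

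For the isomorphism statement, suppose each $a_i$ is a unit in $D$. Then I would show $\phi$ is surjective by producing a preimage of each $e_i$ and $-e_i$. The element $e_i$ is $\phi(X_i^+)$, so lies in the image. For $-e_i$, note that $\phi(X_i^-) = a_i(-e_i)$, and since $a_i$ is a unit with $a_i^{-1} \in D$ mapping to itself, we get $-e_i = a_i^{-1}\cdot a_i(-e_i) = \phi(a_i^{-1}X_i^-)$ in the image. As $D$, the $e_i$, and the $-e_i$ together generate $D*\mathbb{Z}^n$ as a ring, $\phi$ is onto, hence an isomorphism.

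The main obstacle I anticipate is the careful bookkeeping in the injectivity argument: one must pin down a precise $D$-module basis of $D(a,\sigma)$ and verify that the leading terms of the images remain nonzero and linearly independent after the central elements $a_i$ are inserted by $\phi$. This is where the hypotheses that $D$ is a domain and that the $a_i$ are nonzero central elements with $\sigma_i(a_j)=a_j$ for $j\neq i$ are genuinely used; everything else is a direct relation check. Since the freeness of $D(a,\sigma)$ over $D$ is standard for Generalized Weyl algebras, I would cite it rather than reprove it, and focus the writeup on the relation verification and the unit computation for surjectivity.
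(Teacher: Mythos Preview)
Your approach is correct and essentially identical to the paper's: define $\phi$ on generators by $X_i^+\mapsto e_i$, $X_i^-\mapsto a_i e_i^{-1}$, verify the defining relations, and use the free $D$-module structure of $D(a,\sigma)$ to deduce injectivity (the paper compresses all of this into two sentences, while you spell out the relation checks and the surjectivity argument when each $a_i$ is a unit). The only point to tighten is the description of the basis in your injectivity step: the canonical $D$-basis of $D(a,\sigma)$ consists of the monomials $v_k$, $k\in\mathbb{Z}^n$, built from $(X_i^+)^{k_i}$ or $(X_i^-)^{-k_i}$ according to the sign of $k_i$, rather than arbitrary products $(X_i^+)^{p_i}(X_i^-)^{q_i}$; with this basis each $v_k$ lands in a distinct $\mathbb{Z}^n$-component of $D*\mathbb{Z}^n$ with nonzero $D$-coefficient, and linear independence is immediate.
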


\begin{proof}
Consider the map  $\phi: D(a, \sigma)\rightarrow D*\mathbb{Z}^n$,
that sends $X_i^+$ to $e_i$ and $X_i^-$ to $a_i  e_i^{-1}$. Since $e_i$ and $a_i e_i^{-1}$, $i=1, \ldots, n$,   
satisfy the defining relations  of $D(a, \sigma)$, $\phi$ defines a homomorphism. It maps a basis of the $D$-module $D(a, \sigma)$ to a linearly independent set of elements of the $D$-module $D*\mathbb{Z}^n$. Hence, $\phi$ is injective.  The last claim is clear.
\end{proof}

It was shown in \cite{Futorny}, Proposition 7.1 that the Generalized Weyl algebra $D(a, \sigma)$ of rank $1$ with $\sigma$ of an infinite order is a Galois order over $D$ in $(\Frac \ D) * \Z$. 
We extend this result to arbitrary rank Generalized Weyl algebras.

\begin{theorem}\label{thm-GWA}
Let $D(a, \sigma)$ be a Generalized Weyl algebra, $G\simeq \mathbb{Z}^n$ the abelian group generated by $\sigma_1,\ldots, \sigma_n$: $y_1\sigma_1 + \ldots + y_n \sigma_n:= \sigma_1^{y_1} \ldots \sigma_n^{y_n}$ for all integer $y_1, \ldots, y_n$.  Then $D(a, \sigma)$ is a Galois order over $D$ in the skew group ring $\Frac \ D*\mathbb{Z}^n$ described in the previous proposition, and $D$ is a Harish-Chandra subalgebra. 
\end{theorem}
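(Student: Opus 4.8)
The plan is to realize $D(a,\sigma)$ inside a concrete skew group ring and then apply the machinery of Propositions \ref{useit} and \ref{useit2}. First I set up the Galois datum. Since $D$ is a Noetherian domain, $K:=\Frac\, D$ is a field, and each $\sigma_i$ extends to an automorphism of $K$; I take $L=K$ with trivial Galois group $G$ (exactly as in the Weyl algebra example recalled above), and let $\mathfrak{M}=\Z^n$ act on $L$ through $y_1\sigma_1+\dots+y_n\sigma_n\mapsto \sigma_1^{y_1}\cdots\sigma_n^{y_n}$. The one substantive point is that $\mathfrak{M}$ must satisfy property $(\Xi)$: because $L=K$, this reduces to the \emph{faithfulness} of the action $\Z^n\to\Autk(K)$, which is precisely the hypothesis that $\sigma_1,\dots,\sigma_n$ are linearly independent over $\Z$. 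With $(\Xi)$ in hand, $\mathfrak{K}=(L*\mathfrak{M})^G=K*\Z^n=\Frac\, D*\Z^n$ is exactly the skew group ring named in the statement.

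Next I verify that $D(a,\sigma)$ is a Galois \emph{ring} over $D$. By Proposition \ref{prop-embed-GWA} there is an embedding $D(a,\sigma)\hookrightarrow D*\Z^n\subset \mathfrak{K}$ with $X_i^+\mapsto e_i$ and $X_i^-\mapsto a_i e_i^{-1}$. As a $D$-ring, $D(a,\sigma)$ is generated by the $2n$ elements $X_1^\pm,\dots,X_n^\pm$, whose supports are the singletons $\{e_i\}$ and $\{-e_i\}$; their union $\{\pm e_1,\dots,\pm e_n\}$ generates $\Z^n=\mathfrak{M}$ as a monoid. Proposition \ref{useit}(i) then immediately yields that $D(a,\sigma)$ is a Galois ring over $D$.

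To upgrade \emph{ring} to \emph{order}, I invoke Proposition \ref{useit2}, so it suffices to show that $D(a,\sigma)$ is projective --- indeed free --- as a left and as a right $D$-module. This is the one computational step: using $[X_i^\pm,X_j^\pm]=0$ for $i\neq j$ together with $X_i^-X_i^+=a_i$ and $X_i^+X_i^-=\sigma_i(a_i)$, every element reduces to a unique $D$-combination of the monomials $X_1^{(k_1)}\cdots X_n^{(k_n)}$ indexed by $(k_1,\dots,k_n)\in\Z^n$ (where $X_i^{(k)}=(X_i^+)^k$ for $k\geq 0$ and $X_i^{(k)}=(X_i^-)^{-k}$ for $k<0$). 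The embedding sends these to the $D$-linearly independent elements supported at the distinct group elements of $\Z^n$, which is the linear-independence argument already used in the proof of Proposition \ref{prop-embed-GWA}; hence $D(a,\sigma)$ is free over $D$ on both sides. Proposition \ref{useit2} now gives that $D(a,\sigma)$ is a left and right Galois order over $D$. Finally, since $D$ is a commutative Noetherian domain and $D(a,\sigma)$ is a Galois order over it, the result of \cite{Futorny} recalled above forces $D$ to be a Harish-Chandra subalgebra, completing the proof.

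I expect the genuine content to be concentrated in the verification of property $(\Xi)$: this is the only place the hypothesis of $\Z$-independence is used, and it is exactly what makes $\mathfrak{M}\cong\Z^n$ embed faithfully in $\Autk(K)$, so that the skew group ring $\Frac\, D*\Z^n$ --- rather than a ring built on some proper quotient of $\Z^n$ --- is the correct ambient object. Once this is settled, the Galois-ring and Galois-order conclusions are formal consequences of the earlier results, the only routine labor being the normal-form computation establishing freeness.
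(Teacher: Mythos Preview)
Your proof is correct and follows essentially the same route as the paper: verify $(\Xi)$ via the $\Z$-independence hypothesis, use Proposition~\ref{prop-embed-GWA} and Proposition~\ref{useit}(i) to get a Galois ring, then invoke freeness of $D(a,\sigma)$ over $D$ together with Proposition~\ref{useit2} to upgrade to a Galois order (with the Harish-Chandra conclusion following from the cited result in \cite{Futorny}). Your write-up supplies more detail on the support computation and the normal-form/freeness argument than the paper's terse proof, but the skeleton is the same.
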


\begin{proof} 
The linear independence of $\sigma_1,\ldots, \sigma_n$ over $\Z$ guarantees the condition $(\Xi)$ on the monoid $\mathfrak M$.  The algebra $D(a, \sigma)$ is a Galois ring over $D$ in $(\Frac \ D)*\mathbb{Z}^n$ by Proposition \ref{prop-embed-GWA} and Proposition \ref{useit}.
Taking into account that $D(a, \sigma)$  is a free right (left) $D$-module, the statement follows from Proposition \ref{useit2}.
\end{proof}


\subsection{Birational equivalence}
Recall that two domains $R_{1}$ and $R_{2}$ are \emph{birationally
equivalent} if $\Frac \ R_{1}\simeq \Frac \ R_{2}$. The celebrated Gelfand-Kirillov Conjecture compares the skew field of fractions of the universal enveloping algebras of Lie algebras and 
the skew field of fractions of Weyl algebras. Even though the conjecture is known to be false (cf. \cite{AOV}, \cite{P}) it is important to know for which algebras it holds.  In this section we consider  examples of birational
equivalence for Galois orders and use the theory of Galois orders to prove the analogs of the Gelfand-Kirillov Conjecture. For such alternative proof of the Gelfand-Kirillov Conjecture 
 for arbitrary finite $W$-algebra of type $A$ we refer to \cite{FMO}, Theorem I.

The simplest case 

\begin{proposition}\label{prop-birational}
The algebras $D(a, \sigma)$ and $D*\mathbb{Z}^n$ are birationally equivalent.
\end{proposition}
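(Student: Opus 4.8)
The plan is to exhibit the common skew field of fractions directly, by localizing both algebras at the central multiplicative set $S = D \setminus \{0\}$ and invoking the Ore-localization results already available in the excerpt. The key observation is that $D(a,\sigma)$ and $D*\Z^n$ share the same fraction field precisely because they differ only by the elements $a_i$, which become invertible after inverting $D$.

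First I would recall from Proposition \ref{prop-embed-GWA} that $\phi\colon D(a,\sigma)\hookrightarrow D*\Z^n$ sends $X_i^+\mapsto e_i$ and $X_i^-\mapsto a_i e_i^{-1}$, so that $D(a,\sigma)$ sits inside $D*\Z^n$ as the subalgebra generated over $D$ by the $e_i$ and the $a_i e_i^{-1}$. Next I would pass to fraction fields. By Proposition \ref{useit}(ii), since $D(a,\sigma)$ is a Galois ring over $D$ (which holds under the hypotheses of Theorem \ref{thm-GWA}), the set $S=D\setminus\{0\}$ is a left and right Ore set in $D(a,\sigma)$ and the localization is isomorphic to $\mathfrak{K}=(\Frac\ D)*\Z^n$. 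Likewise $D*\Z^n$ localizes at $S$ to the same skew group ring $(\Frac\ D)*\Z^n$, since inverting the nonzero elements of $D$ already yields $\Frac\ D$ in degree zero. Thus both algebras have the same total ring of fractions $(\Frac\ D)*\Z^n$, which is a skew field because $\Frac\ D$ is a field and $\Z^n$ acts by automorphisms, so every nonzero element is a unit. Therefore $\Frac\ D(a,\sigma)\simeq \Frac(D*\Z^n)$, which is the desired birational equivalence.

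The one point requiring care is to confirm that inside $(\Frac\ D)*\Z^n$ the localization of $D(a,\sigma)$ at $S$ really fills out the whole skew group ring rather than a proper subring: one must check that $e_i^{-1} = a_i^{-1}(a_i e_i^{-1})$ lies in the localization, so that both $e_i$ and $e_i^{-1}$ are recovered and hence all of $\Z^n$ is reached. This is immediate once $a_i\in S$ is inverted, but it is the crux of the argument and is exactly where the Galois-ring structure of Theorem \ref{thm-GWA} (guaranteeing that $S$ is Ore and the localization equals $\mathfrak{K}$) does the work. I expect no serious obstacle here, as the statement is essentially a corollary of Proposition \ref{useit}(ii) applied to both algebras; the main thing is to phrase it cleanly without re-proving the Ore condition from scratch.
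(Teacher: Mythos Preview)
Your overall strategy is sound, but there is a genuine error in the middle paragraph: the ring $(\Frac\,D)*\mathbb{Z}^n$ is \emph{not} a skew field. A skew group ring $L*\mathbb{Z}^n$ over a field $L$ is a skew Laurent polynomial ring, and a typical element such as $1+e_1$ has no inverse there. So the sentence ``which is a skew field because $\Frac\,D$ is a field and $\mathbb{Z}^n$ acts by automorphisms, so every nonzero element is a unit'' is false, and $(\Frac\,D)*\mathbb{Z}^n$ is not the total ring of fractions of either algebra, only a common intermediate localization.

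The fix is immediate: once you have shown that $D(a,\sigma)$ and $D*\mathbb{Z}^n$ become equal after inverting $S=D\setminus\{0\}$, it follows that they share the same Ore quotient skew field, since $\Frac\,R=\Frac(R[S^{-1}])$ for any Ore set $S$ in a Noetherian domain $R$. Your argument is therefore one observation away from being correct.

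By comparison, the paper's proof is a single line and uses none of the Galois-ring machinery. It just notes that inverting the image of $D(a,\sigma)$ inside $D*\mathbb{Z}^n$ already produces $e_i^{-1}=(X_i^+)^{-1}$, hence all of $D*\mathbb{Z}^n$; thus $D(a,\sigma)\subset D*\mathbb{Z}^n\subset \Frac\,D(a,\sigma)$, forcing the fraction fields to coincide. This avoids invoking Proposition~\ref{useit}(ii) and, with it, the implicit reliance on the linear-independence hypothesis of Theorem~\ref{thm-GWA}. Your final paragraph essentially rediscovers this elementary observation, but you do not realize it is already the whole proof.
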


\begin{proof}
Inverting the image of every element of $D(a, \sigma)$ in $D*\mathbb{Z}^n$ clearly permit us to invert every element of the latter.
\end{proof}

We use Proposition \ref{prop-birational} to prove that the Quantum Gelfand-Kirillov Conjecture for $U_q(sl_2)$ and  for the first and second Witten  deformations.

Recall that the  \emph{quantum plane} $\k_q[x,y]$ over $\k$ is defined as $\k\langle x,y\mid
yx=qxy\rangle$ and the first quantum Weyl algebra $A_1^q(\k)$ is defined as  $\k\langle x,y\mid
yx-qxy=1\rangle$.  

The following isomorphism  is well-known (cf. \cite{Jo}, \cite{BG}).
\begin{proposition}
Let $n$ be a positive integer and $(q_1,\ldots, q_n)\in(\k\setminus\{0,1\})^n$. Then the skew fields of fractions
of 
 the tensor product of quantum Weyl algebras
$$A_1^{q_1}(\k)\otimes_\k \cdots \otimes_\k A_1^{q_n}(\k)$$
and of the tensor product of quantum planes
\begin{equation*}
\k_{q_1}[x,y]\otimes_\k\cdots\otimes_\k
 \k_{q_n}[x,y]
\end{equation*}
 are isomorphic.
\end{proposition}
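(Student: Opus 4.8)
The plan is to reduce the statement to the rank-one case and then tensor, using the Generalized Weyl algebra machinery developed above. First I would realize each factor as a rank-one GWA over $D_i=\k[h_i]$. Setting $h_i=yx$ in $A_1^{q_i}(\k)$, a direct check shows $A_1^{q_i}(\k)\simeq \k[h_i](h_i,\sigma_i)$ with $\sigma_i(h_i)=q_i^{-1}(h_i-1)$, while setting $h_i=yx$ in $\k_{q_i}[x,y]$ gives $\k_{q_i}[x,y]\simeq \k[h_i](h_i,\tau_i)$ with $\tau_i(h_i)=q_i^{-1}h_i$; in both cases $X_i^+=x$, $X_i^-=y$ and $a_i=h_i$. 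The two presentations thus differ only in the automorphism of $\k[h_i]$: one affine, one linear.

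Next, by Proposition \ref{prop-tensor} the two $n$-fold tensor products are again GWAs over $D=\k[h_1,\dots,h_n]$, namely $\bigotimes_i A_1^{q_i}(\k)\simeq D(a,\sigma)$ and $\bigotimes_i\k_{q_i}[x,y]\simeq D(a,\tau)$, with the same $a=(h_1,\dots,h_n)$, where $\sigma=(\sigma_1,\dots,\sigma_n)$ and $\tau=(\tau_1,\dots,\tau_n)$ act diagonally, each $\sigma_i$ and $\tau_i$ fixing $h_j$ for $j\neq i$. By Proposition \ref{prop-birational} each of these GWAs is birationally equivalent to the skew group ring $D*\Z^n$ carrying, respectively, the $\sigma$-action and the $\tau$-action (this is the step where the noninvertible $a$ is harmlessly discarded). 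Thus it suffices to exhibit an isomorphism of skew fields between $\Frac(D*\Z^n)$ with the $\sigma$-action and $\Frac(D*\Z^n)$ with the $\tau$-action.

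The crux is a change of variables. Since $q_i\neq 1$, the automorphism $\sigma_i$ of $\k(h_i)$ has the unique fixed point $h_i^\ast=\tfrac{1}{1-q_i}$, and in the shifted coordinate $h_i'=h_i-h_i^\ast$ one computes $\sigma_i(h_i')=q_i^{-1}h_i'$. Hence the $\k$-algebra automorphism of $\k(h_1,\dots,h_n)$ sending $h_i\mapsto h_i-h_i^\ast$ conjugates the $\sigma$-action into the $\tau$-action. Because each $\sigma_i$ (resp. $\tau_i$) moves only the variable $h_i$, these shifts are mutually compatible and compatible with all the commuting automorphisms, so the map extends to an isomorphism of skew group rings $\k(h_1,\dots,h_n)*\Z^n$ with the $\sigma$-action onto $\k(h_1,\dots,h_n)*\Z^n$ with the $\tau$-action, and therefore of their skew fields of fractions. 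Composing with the birational equivalences of the previous paragraph yields the desired isomorphism.

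The main obstacle I expect is precisely this intertwining step: one must verify that the affine shift genuinely conjugates $\sigma_i$ to $\tau_i$ (this is exactly where the hypothesis $q_i\neq 1$ enters, guaranteeing a well-defined finite fixed point $h_i^\ast$), and that the resulting map respects the skew relation $e_i h=\sigma_i(h)e_i$ so as to give an algebra isomorphism rather than a mere linear bijection. The remaining ingredients, namely the correctness of the GWA presentations and the fact that $\Frac$ is compatible with these constructions, are routine once Propositions \ref{prop-tensor}, \ref{prop-embed-GWA} and \ref{prop-birational} are in hand.
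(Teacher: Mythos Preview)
Your argument is correct. The GWA presentations you write down for $A_1^{q_i}(\k)$ and $\k_{q_i}[x,y]$ are accurate, the tensor reduction via Proposition~\ref{prop-tensor} is legitimate, and the fixed-point shift $h_i\mapsto h_i-\tfrac{1}{1-q_i}$ does conjugate $\sigma_i$ to $\tau_i$ (this is exactly where $q_i\neq 1$ is used, and $q_i\neq 0$ is implicitly used for the automorphisms to make sense). The extension of this conjugation to an isomorphism of skew group rings, and hence of their skew fields, is routine.

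Note, however, that the paper does not give its own proof of this proposition: it records the statement as well known and refers to \cite{Jo} and \cite{BG}. So there is no ``paper's proof'' to compare against. What you have done is supply a self-contained argument using precisely the GWA framework that the paper sets up (Propositions~\ref{prop-tensor}, \ref{prop-embed-GWA}, \ref{prop-birational}); this is a pleasant internal consistency check and is in the spirit of the cited source \cite{Jo}, where the rank-one case is handled by essentially the same localization-and-shift trick.
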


The skew field of fractions of the tensor product of quantum Weyl algebras
$A_1^{q_1}(\k)\otimes_\k \cdots \otimes_\k A_1^{q_n}(\k)$ is called \emph{quantum Weyl field}. The
\emph{Quantum Gelfand-Kirillov Conjecture} compares the skew field of fractions of a given algebra with quantum fields \cite{BG}.

\

\begin{remark}
Let $D$ be a finitely generated  commutative $\k$-algebra, $\sigma \in Aut_{\k} D$ and $A=D[x; \sigma]$  the Ore extension. Then the localization of $A$ by $x$ is isomorphic to $D*\mathbb{Z}$, where $\overline{1}$ acts as $\sigma$. The isomorphism is identity on $D$ and sends $x$ to $\overline{1}$. 
\end{remark}

\

Assume now that $\k=\mathbb C$. 
For  $U_q(sl_2)$ the Quantum Gelfand-Kirillov Conjecture was established in \cite{Alev1}. We reprove it using the Generalized Weyl algebra approach. 
It is well known that $U_q(sl_2)$ can be realized as a Generalized Weyl algebra of the following form: $\mathbb C[c,h,h^{-1}](a,\sigma)$, where $\sigma$ fixes $c$ and send $h$ to $qh$; $a=c+(h^2/(q^2-1)-h^{-2}/(q^{-2}-1))/(q-q^{-1})$. Note that $q$ is arbitrary, $q\neq \pm 1$.

\begin{corollary}\label{cor-U_q}
\emph{(Quantum Gelfand-Kirillov Conjecture for $U_q(sl_2)$)}
$\Frac \, U_q(sl_2) \cong \Frac \, (\mathbb C_q[x,y] \otimes \mathbb{C}[c])$.
\end{corollary}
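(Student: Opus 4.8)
The plan is to pass from $U_q(sl_2)$ through its realization as the Generalized Weyl algebra $\mathbb{C}[c,h,h^{-1}](a,\sigma)$ to the skew group ring $\mathbb{C}[c,h,h^{-1}]*\mathbb{Z}$, whose skew field of fractions is visibly a quantum Weyl field, and then to match the latter with $\Frac\,(\mathbb{C}_q[x,y]\otimes\mathbb{C}[c])$. By Proposition \ref{prop-birational} the algebras $D(a,\sigma)$ and $D*\mathbb{Z}$ are birationally equivalent, so with $D=\mathbb{C}[c,h,h^{-1}]$ we obtain at once
$$\Frac\, U_q(sl_2)\;\cong\;\Frac\,\bigl(\mathbb{C}[c,h,h^{-1}]*\mathbb{Z}\bigr).$$
Thus the whole problem reduces to computing the fraction field of this one skew group ring.

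Next I would exploit that $\sigma$ fixes the central variable $c$. Writing $D=\mathbb{C}[c]\otimes_{\mathbb{C}}\mathbb{C}[h,h^{-1}]$, the automorphism $\sigma$ acts trivially on the first factor and by $h\mapsto qh$ on the second, so the skew group ring splits as an algebra isomorphism
$$\mathbb{C}[c,h,h^{-1}]*\mathbb{Z}\;\cong\;\mathbb{C}[c]\otimes_{\mathbb{C}}\bigl(\mathbb{C}[h,h^{-1}]*\mathbb{Z}\bigr),$$
with $c$ still central. In the factor $\mathbb{C}[h,h^{-1}]*\mathbb{Z}$ let $z$ denote the generator corresponding to $1\in\mathbb{Z}$; then $zh=\sigma(h)z=qhz$, so this factor is precisely the quantum torus $\mathbb{C}\langle h^{\pm 1},z^{\pm 1}\mid zh=qhz\rangle$. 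Since the quantum torus is the Ore localization of the quantum plane $\mathbb{C}_q[x,y]$ at the powers of its two generators (the localization of an Ore extension as in the remark above), and $\Frac$ is invariant under Ore localization, this gives $\Frac\,(\mathbb{C}[h,h^{-1}]*\mathbb{Z})\cong\Frac\,\mathbb{C}_q[x,y]$.

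Finally, because the tensor factor $\mathbb{C}[c]$ is a central commutative polynomial ring, adjoining it commutes with passage to fraction fields, and one concludes
$$\Frac\,\bigl(\mathbb{C}[c,h,h^{-1}]*\mathbb{Z}\bigr)\;\cong\;\Frac\,\bigl(\mathbb{C}_q[x,y]\otimes_{\mathbb{C}}\mathbb{C}[c]\bigr),$$
which combined with the reduction of the first paragraph yields the claim. The main obstacle is the bookkeeping of the middle step: one must verify that the tensor decomposition of the skew group ring is genuinely an algebra isomorphism respecting the central factor, and that forming $\Frac$ commutes both with the Ore localization identifying the quantum torus with the quantum plane and with the central factor $\mathbb{C}[c]$. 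The latter is justified by noting that $c$ is a central indeterminate, so the skew field of the tensor product is obtained simply by inverting the nonzero elements of $\mathbb{C}[c]$ inside the already-constructed skew field of the quantum factor, and no further noncommutative Ore analysis is needed.
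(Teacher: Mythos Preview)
Your proof is correct and follows essentially the same route as the paper: invoke Proposition~\ref{prop-birational} to pass to $\Frac\bigl(\mathbb{C}[c,h,h^{-1}]*\mathbb{Z}\bigr)$, split off the central variable $c$ so that the skew group ring becomes $\mathbb{C}[c]\otimes(\mathbb{C}[h,h^{-1}]*\mathbb{Z})$, identify the second factor with the quantum torus $\mathbb{C}_q[x^{\pm 1},y^{\pm 1}]$ (this is exactly the content of the remark preceding the corollary), and conclude. Your write-up is in fact more explicit than the paper's about why the tensor decomposition is an algebra isomorphism and why passage to $\Frac$ commutes with the central extension by $\mathbb{C}[c]$, but the argument is the same.
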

\begin{proof}
The skew field of fractions $\Frac \, U_q(sl_2) $ is isomorphic to the skew field of fractions of $\mathbb C[c,h,h^{-1}]*\mathbb{Z}$  by Proposition \ref{prop-birational}. 
 The latter ring is birationally equivalent to $(\mathbb C [c] \otimes \mathbb C [h,h^{-1}])*\mathbb{Z} \cong \mathbb C [c] \otimes \mathbb C_q[x^\pm,y^\pm]$ by the remark above. The statement immediately follows. Note that the argument is independent of $a$.
\end{proof}

The Woronowicz deformation, isomorphic to the second Witten deformation (\cite{Bavula2}), can be realized as a Generalized Weyl algebra  $D(a, \sigma)$, where 
 $D=\mathbb C [H,Z]$, $a=Z + \alpha H + \beta$ with
\[ \sigma(H)=s^4 H, \sigma(Z)=s^2 Z, \alpha = -1/s(1 - s^2), \beta = s/(1 - s^4). \]
\[ s\in \k, s \neq 0, \pm 1, \pm i .\]

 The Woronowicz deformation is birationally equivalent to $D*\mathbb{Z}$ by Proposition \ref{prop-birational}. If $e$ is a basis of $\mathbb{Z}$, then $e(H) =s^4 H$, $e(Z) = s^2 Z$.

\begin{corollary}\label{cor-Woron}
\emph{(Quantum Gelfand-Kirillov Conjecture for the Woronowicz deformation)}
 The
  skew field of fractions of $D(a, \sigma)$ is isomorphic to the
  skew field of fractions of $\mathbb C_Q[Y, H, Z]$, where $Q=(q_{ij})_{i,j=1}^3$ is multiplicatively antisymmetric matrix  with entries: $q_{12}=s^4$, $q_{13}= s^2$, $q_{23}=1$.
\end{corollary}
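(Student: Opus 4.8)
The plan is to follow the same strategy used for $U_q(sl_2)$ in Corollary \ref{cor-U_q}, combining Proposition \ref{prop-birational} with an explicit change of coordinates to recognize the resulting skew group ring as a quantum affine space. First I would apply Proposition \ref{prop-birational} to the Woronowicz deformation $D(a,\sigma)$ with $D = \mathbb{C}[H,Z]$, concluding that $\Frac\, D(a,\sigma) \cong \Frac\, (D*\mathbb{Z})$. The key point here is that this birational equivalence is \emph{independent of} $a$, exactly as noted at the end of the proof of Corollary \ref{cor-U_q}, so the affine summand $Z + \alpha H + \beta$ plays no role and only the automorphism $\sigma$ matters.

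Next I would analyze the ring $D*\mathbb{Z} = \mathbb{C}[H,Z]*\mathbb{Z}$, where the generator $e$ of $\mathbb{Z}$ acts by $e(H) = s^4 H$ and $e(Z) = s^2 Z$. Writing $Y$ for the generator of the group algebra part (so $Y$ corresponds to $e$), the commutation relations become $YH = s^4 HY$ and $YZ = s^2 ZY$, while $H$ and $Z$ commute with each other since $D$ is commutative. This is precisely the presentation of the quantum affine space $\mathbb{C}_Q[Y,H,Z]$ for the multiplicatively antisymmetric matrix $Q$ with $q_{12} = s^4$ (the $Y$--$H$ relation), $q_{13} = s^2$ (the $Y$--$Z$ relation), and $q_{23} = 1$ (the commuting pair $H,Z$). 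Thus $D*\mathbb{Z}$ is isomorphic to a localization of $\mathbb{C}_Q[Y,H,Z]$ at the powers of $Y$ (since $e$ is invertible in the skew group ring), and inverting $Y$ does not change the skew field of fractions.

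Assembling these two steps, I would conclude
\[
\Frac\, D(a,\sigma) \;\cong\; \Frac\,(D*\mathbb{Z}) \;\cong\; \Frac\, \mathbb{C}_Q[Y,H,Z],
\]
which is the claimed isomorphism. The argument is almost entirely formal once the coordinates are matched. The only step requiring genuine care — and the main potential obstacle — is verifying that the relations induced by $\sigma$ in $D*\mathbb{Z}$ reproduce exactly the antisymmetric matrix $Q$ as stated, including the precise exponents $s^4$ and $s^2$ and the fact that $q_{23}=1$; one must check the direction of the skewing conventions (whether $Y$ acts as $\sigma$ or $\sigma^{-1}$) so that the exponents come out correctly rather than inverted. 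Since $s$ is a unit, inverting it causes no difficulty, so this is a bookkeeping check rather than a substantive difficulty, and the hypotheses $s \neq 0, \pm 1, \pm i$ guarantee that $\sigma$ has infinite order so that the skew group ring over $\mathbb{Z}$ is the correct object.
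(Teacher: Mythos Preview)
Your proposal is correct and follows exactly the approach the paper takes: the paragraph preceding the corollary already records that $D(a,\sigma)$ is birationally equivalent to $D*\mathbb{Z}$ by Proposition~\ref{prop-birational} with $e(H)=s^4H$, $e(Z)=s^2Z$, and your identification of $D*\mathbb{Z}$ with (a localization of) $\mathbb{C}_Q[Y,H,Z]$ via $YH=s^4HY$, $YZ=s^2ZY$, $HZ=ZH$ is precisely what the paper leaves implicit. Your extra remarks about the irrelevance of $a$ and the bookkeeping of the skewing direction are accurate and make explicit what the paper takes for granted.
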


The first Witten deformation can be realized as a Generalized Weyl algebra $D(a, \sigma)$, where  $D=\mathbb C [C,H]$, $a=C - H(H-1)/(p+p^{-1})$, $\sigma(C)=C, \sigma(H)=p^2(H-1)$, with  $p\in \k, p \neq 0, \pm 1, \pm i $.

\begin{corollary}\label{cor-Witten}
\emph{(Quantum Gelfand-Kirillov Conjecture for the first  Witten deformation)}
 The field of fractions of the first Witten  deformation is isomorphic to $\Frac \, (\mathbb C [C] \otimes \mathbb{C}_{p^2}[x,y])$.
\end{corollary}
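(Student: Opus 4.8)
The plan is to follow the same strategy as in the proof of Corollary \ref{cor-U_q}, reducing the computation of the skew field of fractions to that of a quantum plane via Proposition \ref{prop-birational} and the Remark on Ore extensions above. First I would invoke Proposition \ref{prop-birational} to conclude that $\Frac$ of the first Witten deformation is isomorphic to $\Frac(D \ast \Z)$, where $D = \mathbb{C}[C,H]$ and $\Z$ acts through a basis element $e$ with $e(C) = C$ and $e(H) = p^2(H-1)$. As in the $U_q(sl_2)$ case, this step is independent of the element $a$, since Proposition \ref{prop-birational} applies to any Generalized Weyl algebra regardless of the choice of $a$.

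The essential difference from Corollary \ref{cor-U_q} is that here $\sigma$ acts on $H$ by an \emph{affine}, rather than linear, transformation $H \mapsto p^2(H-1)$, so the tensor decomposition used for $U_q(sl_2)$ does not apply directly. The main step is therefore to linearize this action by a change of coordinates. Using $p^2 \neq 1$ (guaranteed by $p \neq 0, \pm 1, \pm i$), I would locate the fixed point $H^\ast = p^2/(p^2 - 1)$ of the affine map and set $H' = H - H^\ast$; since $H^\ast$ is a scalar fixed by $\sigma$, a short computation gives $\sigma(H') = p^2 H'$. Moreover $\mathbb{C}[C,H] = \mathbb{C}[C,H']$, so after this substitution $\sigma$ acts linearly, fixing $C$ and scaling $H'$ by $p^2$.

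With the action linearized, the argument parallels the $U_q(sl_2)$ case. Since $\Z$ acts trivially on $\mathbb{C}[C]$ and by $H' \mapsto p^2 H'$ on $\mathbb{C}[H']$, I would write $D \ast \Z \cong \mathbb{C}[C] \otimes (\mathbb{C}[H'] \ast \Z)$. By the Remark above, $\mathbb{C}[H'] \ast \Z$ is the localization at the adjoined variable of the Ore extension $\mathbb{C}[H'][y;\sigma]$ with $\sigma(H') = p^2 H'$; identifying $H'$ with $x$ and the Ore variable with $y$, the relation $y H' = p^2 H' y$ exhibits this Ore extension as the quantum plane $\k_{p^2}[x,y]$. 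As localization preserves the skew field of fractions, $\Frac(\mathbb{C}[H'] \ast \Z) \cong \Frac(\k_{p^2}[x,y])$, and passing to fraction fields of the tensor product yields $\Frac(D \ast \Z) \cong \Frac(\mathbb{C}[C] \otimes \k_{p^2}[x,y])$. Combined with the first step, this gives the claim.

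I expect the only genuinely non-routine point to be the coordinate shift linearizing the affine action: one must absorb the constant term before the tensor factorization and the Remark can be applied, and it is precisely here that the hypothesis $p^2 \neq 1$ enters. Everything else is a direct transcription of the $U_q(sl_2)$ argument.
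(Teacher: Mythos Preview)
Your proof is correct and follows essentially the same route as the paper: apply Proposition~\ref{prop-birational}, factor off $\mathbb{C}[C]$, and identify $\Frac(\mathbb{C}[H]\ast\mathbb{Z})$ with the fraction field of the quantum plane $\k_{p^2}[x,y]$. The paper's proof is terser---it asserts the last identification as ``clearly'' holding without spelling out the coordinate shift $H'=H-p^2/(p^2-1)$ that linearizes the affine action---so your version in fact supplies a detail the paper suppresses.
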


\begin{proof}
The first Witten deformation  is birationally equivalent to $\mathbb C [C] \otimes (\mathbb C [H] * \mathbb{Z})$ by Proposition \ref{prop-birational}. Since $\mathbb C [H] * \mathbb{Z}$ is clearly birationally equivalent to the quantum Weyl algebra with the parameter $p^2$, the statement follows.
\end{proof}

\subsection{Linear Galois orders}
For the sake of completeness we recall the concept of a linear Galois order introduced in \cite{Eshmatov}.
Let  $V$ be a complex $n$-dimensional vector space, $L\simeq \mathbb C (t_{1},\dots, t_{n})$  the field of fractions of the symmetric algebra $S(V)$. 
Let $G$ be a classical reflection group  acting on $V$
 by reflections. This action can be extended to the action of $G$ on $L$ and we set
 $K=L^G$.
 Suppose $G$ normalizes a fixed subgroup $\mathcal M\subset \Aut(L)$ and $\Gamma$ is  a polynomial subalgebra such that $\Frac \  \Gamma\simeq K$.
   A Galois order $U$ over $\Gamma$ in $(L*\mathcal M)^G$ is called  \emph{linear Galois order}.

\begin{theorem}[\cite{Eshmatov}, Theorem 6]
\label{theorem-main-theorem}
Let $U$ be a linear Galois order  in $(L*  \mathbb Z^{n})^G$, such that
 $L=\mathbb C (t_1, \ldots, t_n;  z_1, \ldots z_m)$, for some integers $n$, $m$, and    $\mathbb Z^{n}$ is generated by $\varepsilon_1, \ldots \varepsilon_n$, where
 $\varepsilon_i(t_j)=t_j + \delta_{ij}$, $\varepsilon_i(z_k)=z_k$  $i,j=1, \ldots, n$, $k=1, \ldots, m$.
  Then $U$ is birationally equivalent to $A_n(\mathbb C)\otimes \mathbb C [z_1, \ldots, z_m]$.
\end{theorem}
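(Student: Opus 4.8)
The plan is to reduce the statement to the model case of the full Weyl algebra by exploiting the defining features of a linear Galois order. By definition $U$ sits inside $(L * \mathbb Z^n)^G$, and by Proposition \ref{useit}(ii) the localization of $U$ at $S = \Gamma \setminus \{0\}$ recovers the whole ring $\mathfrak{K} = (L * \mathbb Z^n)^G$. Since birational equivalence only sees the skew field of fractions, I would first replace $U$ by $\mathfrak{K}$ itself; it therefore suffices to prove that $\Frac\,(L * \mathbb Z^n)^G \simeq \Frac\,(A_n(\mathbb C) \otimes \mathbb C[z_1, \ldots, z_m])$.

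Second, I would compute the fraction field of $(L * \mathbb Z^n)^G$ by removing the invariants. Here the key observation is that $G$ acts on $L = \mathbb C(t_1, \ldots, t_n; z_1, \ldots, z_m)$ with $L^G = K$, and $G$ normalizes $\mathbb Z^n$, so taking $G$-invariants of the crossed product and then passing to fractions should give the same skew field as $L * \mathbb Z^n$ itself, up to the finite extension $[L : K] = |G|$. Concretely, since $G$ is finite and $L/K$ is Galois, the localization argument of Proposition \ref{useit}(ii) already identifies $\Frac\,(L*\mathbb Z^n)^G$ with $\mathfrak{K}$, and a standard descent shows $\Frac\,\mathfrak{K}$ agrees with $\Frac\,(L * \mathbb Z^n)$ after inverting the central/commutative part. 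The upshot is that the reflection group $G$ washes out at the level of fraction fields, and I am left to analyze the plain skew group ring $L * \mathbb Z^n$.

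Third, I would identify $\Frac\,(L * \mathbb Z^n)$ explicitly using the prescribed action. The generators $\varepsilon_i$ act by $\varepsilon_i(t_j) = t_j + \delta_{ij}$ and fix each $z_k$; this is exactly the translation action that realizes the $n$-th Weyl algebra via the embedding recalled in Section \ref{sec-Galois}, where $\partial_i \mapsto t_i \sigma_i^{-1}$ and $x_i \mapsto \sigma_i$. Thus $\mathbb C(t_1, \ldots, t_n) * \mathbb Z^n \simeq \Frac\,A_n(\mathbb C)$, and since the $z_k$ are central and fixed by every $\varepsilon_i$, they split off as a commuting polynomial (indeed purely transcendental) factor. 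This yields
\begin{equation*}
\Frac\,(L * \mathbb Z^n) \simeq \Frac\bigl(A_n(\mathbb C) \otimes \mathbb C[z_1, \ldots, z_m]\bigr),
\end{equation*}
which combined with the first two steps gives the claim.

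The main obstacle I anticipate is the descent step: justifying carefully that passing to $G$-invariants does not change the skew field of fractions. Because the crossed product is noncommutative, one cannot simply invoke the commutative Galois-descent identity $\Frac\,(B^G) = (\Frac\,B)^G$; instead the argument must go through the Ore localization and the explicit identification $KU = \mathfrak{K}$ built into the definition of a Galois ring, and one must check that the $z_k$ variables, which are $G$-stable but not necessarily $G$-fixed pointwise, still separate cleanly from the Weyl part. Handling this interaction between the reflection action on the $t_i$ and its possibly nontrivial action on the $z_k$ is where the real care is needed; everything else is the routine identification of the translation skew group ring with the Weyl field.
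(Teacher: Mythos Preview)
The paper does not actually prove this theorem: it is quoted verbatim from \cite{Eshmatov}, whose title is \emph{Noncommutative Noether's Problem for Unitary Reflection Groups}. So there is no ``paper's own proof'' to compare against; what can be assessed is whether your outline would succeed on its own.

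Your Step~1 and Step~3 are fine and routine: Proposition~\ref{useit}(ii) does reduce $\Frac\,U$ to $\Frac\,\mathfrak{K}$, and the identification of $\mathbb C(t_1,\ldots,t_n)*\mathbb Z^n$ with the Weyl field is the standard embedding recalled in Section~\ref{sec-Galois}. The gap is Step~2, and it is not a technicality but the entire content of the theorem. You write that ``a standard descent shows $\Frac\,\mathfrak{K}$ agrees with $\Frac(L*\mathbb Z^n)$'' and that ``the reflection group $G$ washes out at the level of fraction fields.'' Neither claim is standard, and neither is true for a general finite group $G$: already commutatively, $\Frac\,(\mathbb C[x]^{\mathbb Z/2})=\mathbb C(x^2)\ne\mathbb C(x)$, and in the noncommutative setting there are finite subgroups $G\subset\Autk\,A_n$ for which $\Frac\,A_n^G$ is \emph{not} a Weyl field. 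The isomorphism $\Frac\,((L*\mathbb Z^n)^G)\simeq\Frac(L*\mathbb Z^n)$, when it holds, is not realized by the inclusion; it is a genuinely new isomorphism whose existence is exactly the noncommutative Noether problem solved in \cite{Eshmatov}.

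What your outline never uses is the single hypothesis that drives the result: that $G$ is a \emph{reflection} group. In the commutative setting this is what makes $\mathbb C[t_1,\ldots,t_n]^G$ polynomial (Chevalley--Shephard--Todd), hence $K=L^G$ purely transcendental; the proof in \cite{Eshmatov} leverages precisely this structure to build the isomorphism with the Weyl field. Your proposed fix via ``Ore localization and the identification $KU=\mathfrak{K}$'' only re-establishes Step~1; it gives no information about how to pass from $\mathfrak{K}=(L*\mathbb Z^n)^G$ to $L*\mathbb Z^n$. As written, the argument is circular: the ``descent'' step assumes the conclusion.
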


In particular, $\mathcal K=(L* \mathbb Z^{n})^G$ is itself a linear 
Galois order with respect to $\Gamma$ which is birationally equivalent to $A_n(\mathbb C)\otimes \mathbb C [z_1, \ldots, z_m]$. The universal enveloping algebra of $gl(N)$ is a linear Galois order in $(L* \mathbb Z^{\frac{N(N-1)}{2}})^G$, where $L=\mathbb C (t_{ij}, j=1, \ldots, i, i=1, \ldots, N)$, $G=\Pi_{k=1}^{N-1}S_k$ and the symmetric group $S_k$ permutes the variables $t_{k1}, \ldots, t_{kk}$, $k=1, \ldots, N-1$. It
is birationally equivalent to $A_{\frac{N(N-1)}{2}}\otimes \mathbb C [z_1, \ldots, z_N]$ (cf. \cite{Futorny3}).

\section{Invariant subalgebras of differential operators}\label{sec-dif-op}
 The ring of differential operators  $\D(A)$  for a  commutative algebra $A$ is defined as
 $\D(A)=\cup_{n=0}^{\infty}\D(A)_n$, where $\D(A)_0=A$ and
$$ \D(A)_n \, = \, \{ \, d \in \End_{\k} (A) \, :\, d\, b - b\, d \in \D(A)_{n-1}\, \mbox{ for all }\, b \in A \} \, .$$

If $A$ is the coordinate ring of an affine variety $X$ we say that $\D(A)$ is the algebra of differential operators on $X$. 

 Suppose that the group $G$ is
acting on $A$ by algebra automorphisms. Then this action extends to the action  on $\D(A)$ as follows:
 $ (g \ast d) \cdot f = (g \circ d \circ g^{-1}) \cdot f$. Fixed (under the action of $G$) elements of $\D(A)$ are called \emph{$G$-invariant differential operators}.

We will be especially interested in invariant differential operators on $n$-dimensional affine space $X=\A^n$ ($\D(A)$ is the Weyl algebra $A_n$) and on $n$-dimensional torus $X=\T^n$ (for $\k = \mathbb C$). 
($\D(A)$ is a certain localization of the Weyl algebra $A_n$).

\subsection{Symmetric and alternating groups}
In this section we consider symmetric subalgebras of the Weyl algebras $A_n$.
We will assume that $n\geq 2$ from now on. 
 The symmetric group $S_n$ acts naturally on $A_n$ by permuting $x_1, \ldots, x_n$ and $\partial_1, \ldots, \partial_n$ simultaneously. 
By Theorem \ref{thm-M-S}, $A_n^{S_n}$ is finitely generated, say by $u_1,\ldots, u_k$. It also contains the elements $u^{*} = x_1 + \ldots + x_n$ and $u^{**} = \partial_1+ \ldots + \partial_n$ that we include into a generating set of $A_n^{S_n}$ as a $\k$-algebra. 
 Fix an embedding $A_n\rightarrow  \k[t_1, \ldots, t_n]*\mathbb Z^n,$ $t_i=\partial_i x_i$, $i=1, \ldots, n$. 
 The  group $\mathbb{Z}^n$ is generated by $\varepsilon_i$,  $i=1, \ldots, n$ such that $\varepsilon_i(t_j) = t_j - \delta_{ij}$. 
 Defining the action of 
 $S_n$  on $\mathbb{Z}^n$ by conjugation, we have an action of $S_n$  on  $ \k[t_1, \ldots, t_n]*\mathbb{Z}^n$.  Taking into account that $A_n^{S_n} $ is simple (\cite{Montgomery}, Thm. 2.5), we  obtain an embedding  $$A_n^{S_n} \rightarrow (\k(t_1, \ldots, t_n)*\mathbb Z^n)^{S_n}.$$ 
Moreover, we  have

\begin{proposition}[\cite{Futorny3}, Theorem 2]\label{prop-S_n}
$A_n^{S_n}$ is a Galois order over $\Gamma=\k[t_1,\ldots,t_n]^{S_n}$.
\end{proposition}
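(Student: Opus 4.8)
The plan is to deduce Proposition~\ref{prop-S_n} directly from Lemma~\ref{lemma1}, applied to $U=A_n$, $\Gamma=\k[t_1,\ldots,t_n]$ and $H=S_n$. Most hypotheses of that lemma are already in hand: $\Gamma$ is Noetherian, $A_n$ is a Galois order over $\Gamma$ inside $\mathfrak K=\k(t_1,\ldots,t_n)*\mathbb Z^n$, and $S_n$ acts by automorphisms of $A_n$ permuting the $t_i$, so that $S_n(\Gamma)\subset\Gamma$. What remains is the single nontrivial input, namely that $A_n^{S_n}$ is a Galois \emph{ring} over $\Gamma^{S_n}=\k[t_1,\ldots,t_n]^{S_n}$ inside the invariant skew monoid ring $\mathfrak K^{*}=(\k(t_1,\ldots,t_n)*\mathbb Z^n)^{S_n}\subset\mathfrak K$; Lemma~\ref{lemma1} then promotes this to the Galois order statement and finishes the proof.

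To present $\mathfrak K^{*}$ as a genuine invariant skew monoid ring I would take $L=\k(t_1,\ldots,t_n)$, Galois group $G=S_n$ with $K=L^{S_n}=\Frac\,\Gamma^{S_n}$, and monoid $\mathfrak M=\mathbb Z^n$ normalized by $S_n$. The one point requiring an argument is the separation property $(\Xi)$, namely that distinct elements of $\mathbb Z^n$ restrict to distinct automorphisms of $K$. This is a short computation: writing $\varepsilon^y$ (for $y\in\mathbb Z^n$) for the translation $t_j\mapsto t_j-y_j$ and using the power sum $p_2=\sum_j t_j^2\in K$, we get $\varepsilon^y(p_2)=\sum_j(t_j-y_j)^2$, whose coefficient of $t_j$ is $-2y_j$; hence $\varepsilon^y|_K$ already determines $y$, and $(\Xi)$ holds. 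Thus $\mathfrak K^{*}$ is an adequate invariant skew monoid ring inside $\mathfrak K$.

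For the Galois ring property I would invoke Proposition~\ref{useit}(i). By Theorem~\ref{thm-M-S} the algebra $A_n^{S_n}$ is a finitely generated $\k$-algebra, hence generated over $\Gamma^{S_n}$ by finitely many elements $u_1,\ldots,u_k$; I enlarge this set to include the two invariant operators $u^{*}=x_1+\cdots+x_n$ and $u^{**}=\partial_1+\cdots+\partial_n$. Under the embedding $x_i\mapsto\varepsilon_i$, $\partial_i\mapsto t_i\varepsilon_i^{-1}$ one reads off $\supp u^{*}=\{\varepsilon_1,\ldots,\varepsilon_n\}$ and $\supp u^{**}=\{\varepsilon_1^{-1},\ldots,\varepsilon_n^{-1}\}$, whose union generates $\mathbb Z^n$ as a monoid. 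Consequently $\bigcup_i\supp u_i$ generates $\mathfrak M=\mathbb Z^n$, and Proposition~\ref{useit}(i) yields that $A_n^{S_n}$ is a Galois ring over $\Gamma^{S_n}$.

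Combining the last two paragraphs with Lemma~\ref{lemma1} gives that $A_n^{S_n}$ is a Galois order over $\Gamma^{S_n}$, as claimed. I expect the genuine content to be the observation that the two honestly invariant ``trace'' operators $u^{*}$ and $u^{**}$ already supply enough support to recover all of $\mathbb Z^n$; the verification of $(\Xi)$ is routine, and the passage from ``Galois ring'' to ``Galois order'' is handled entirely by the machinery of Lemma~\ref{lemma1}. (The simplicity of $A_n^{S_n}$, used earlier to produce the embedding into $\mathfrak K^{*}$, plays no further role in this argument.)
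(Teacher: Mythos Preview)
Your proof is correct and follows precisely the route the paper has in mind. The paper itself does not spell out a proof here (the proposition is quoted from \cite{Futorny3}), but it deliberately records the generators $u^{*}=x_1+\cdots+x_n$ and $u^{**}=\partial_1+\cdots+\partial_n$ just before the statement, and then carries out exactly your argument---finite generation via Theorem~\ref{thm-M-S}, support generation of $\mathbb Z^n$ by $u^{*},u^{**}$ and Proposition~\ref{useit}(i), then upgrade to Galois order via Lemma~\ref{lemma1}---in the proof of the alternating-group case, Corollary~\ref{cor-Alternating group}. Your explicit check of property $(\Xi)$ using $p_2=\sum t_j^2$ is a clean addition that the paper leaves implicit.
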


We remark now that, for any finite group $G$ of automorphisms of $A_n$, the invariant subring is such that $A_n$ is finitely generated over $A_n^G$ (\cite{Montgomery}, Thm. 2.4). This implies that $A_n^G$ is not finitely generated over $\Gamma^G$ --- otherwise $A_n$ would be finitely generated over $\Gamma^G$ and hence $\Gamma$, which is false.

\begin{corollary}\label{cor-sym-free}
Let $\Gamma=\k[t_1,\ldots,t_n]^{S_n}$. Then
$A_n^{S_n}$  is free   as a right (left) $\Gamma$-module. 
\end{corollary}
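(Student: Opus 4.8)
The plan is to deduce freeness from the established Galois order structure together with the rigidity of the commutative base $\Gamma = \k[t_1,\dots,t_n]^{S_n}$. By Proposition \ref{prop-S_n}, $A_n^{S_n}$ is a Galois order over $\Gamma$; in particular, by the remarks preceding Lemma \ref{lemma1} together with Proposition \ref{useit2}, a Galois order over a Noetherian domain is a finitely generated (and in fact reflexive/projective in good cases) $\Gamma$-module. The key observation is that $\Gamma$ is a \emph{polynomial ring}: by the Chevalley--Shephard--Todd theorem, since $S_n$ is a reflection group acting on $\k[t_1,\dots,t_n]$, the invariant ring $\k[t_1,\dots,t_n]^{S_n}$ is itself a polynomial algebra $\k[e_1,\dots,e_n]$ in the elementary symmetric functions. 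Hence $\Gamma$ is regular, and over a regular base the notions of projective and reflexive modules are tightly controlled.

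First I would verify that $A_n^{S_n}$ is a finitely generated $\Gamma$-module. This should follow from the Harish-Chandra property of $\Gamma$ (guaranteed since $A_n^{S_n}$ is a Galois order over $\Gamma$) combined with finite generation of $A_n^{S_n}$ as a $\k$-algebra via Theorem \ref{thm-M-S}. Next I would argue projectivity: one route is to invoke Lemma \ref{lemma2}, taking $U = A_n$, $H = S_n$, and using that $A_n$ is a free (hence projective) right (left) $\k[t_1,\dots,t_n]$-module, while $\k[t_1,\dots,t_n]$ is free over its invariant subring $\Gamma$ (again by Chevalley--Shephard--Todd, the polynomial ring is a free module over its invariants of rank $|S_n| = n!$). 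Lemma \ref{lemma2} then yields directly that $A_n^{S_n}$ is projective as a right (left) $\Gamma$-module.

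Having projectivity over a polynomial ring $\Gamma$, the final step is to upgrade projective to free. Here the natural tool is the Quillen--Suslin theorem, which asserts that every finitely generated projective module over a polynomial ring over a field is free. Since $\Gamma \cong \k[e_1,\dots,e_n]$ is exactly such a polynomial ring and $A_n^{S_n}$ is a finitely generated projective $\Gamma$-module, freeness follows immediately. The same argument applies symmetrically on the left, giving freeness as both a right and a left $\Gamma$-module.

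The main obstacle I anticipate is ensuring the hypotheses of Lemma \ref{lemma2} are genuinely met, in particular that $A_n$ is projective as a one-sided $\k[t_1,\dots,t_n]$-module (this is the stated freeness of $A_n$ over $\k[t_1,\dots,t_n]$ recalled in the Weyl algebra subsection) and that $\k[t_1,\dots,t_n]$ is projective over $\Gamma$. The latter is the crux: it rests on $S_n$ being a reflection group so that Chevalley--Shephard--Todd applies, making the polynomial ring free—and a fortiori projective—over its invariants. Once these two projectivity inputs are in place, Lemma \ref{lemma2} and Quillen--Suslin combine cleanly, and the only remaining care is bookkeeping the left versus right module structures consistently throughout.
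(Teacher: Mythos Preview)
Your projectivity argument via Lemma \ref{lemma2} is correct and matches the paper's approach: $A_n$ is free over $\k[t_1,\dots,t_n]$, the latter is free over $\Gamma$ by Chevalley--Shephard--Todd, so $A_n^{S_n}$ is projective over $\Gamma$. The gap is in the passage from projective to free. You assert that $A_n^{S_n}$ is a finitely generated $\Gamma$-module and then invoke Quillen--Suslin, but this finiteness claim is false. Already $A_n$ is a free $\k[t_1,\dots,t_n]$-module of \emph{countably infinite} rank (e.g.\ the monomials $x_1^{a_1}\cdots x_n^{a_n}\partial_1^{b_1}\cdots\partial_n^{b_n}$ with no factor $\partial_i x_i$ form an infinite basis), and after passing to $S_n$-invariants one still has an infinite-rank $\Gamma$-module: by Proposition \ref{useit}(ii) the localization of $A_n^{S_n}$ at $\Gamma\setminus\{0\}$ is $(\k(t_1,\dots,t_n)\ast\mathbb{Z}^n)^{S_n}$, which is infinite-dimensional over $K=\Frac\,\Gamma$. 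Your proposed justification via the Harish-Chandra property does not help either: knowing that each $\Gamma u_i\Gamma$ is finitely generated for a finite set of algebra generators $u_1,\dots,u_k$ says nothing about the $\Gamma$-span of arbitrary words in the $u_i$.

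The paper closes exactly this gap by citing \cite{Bass}, Corollary 4.5, i.e.\ Bass's theorem that over a connected Noetherian ring every projective module that is \emph{not} finitely generated is free. Since $\Gamma\cong\k[e_1,\dots,e_n]$ is a connected Noetherian domain and $A_n^{S_n}$ is a non-finitely-generated projective $\Gamma$-module, freeness follows. So the fix is simply to replace Quillen--Suslin by Bass's big-projective theorem; the rest of your outline is sound.
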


\begin{proof}
We have that $A_n^{S_n}$ is a Galois order over $\Gamma=\k[t_1,\ldots,t_n]^{S_n}$, by the above, and it follows from Lemma \ref{lemma2}  that it is also projective over $\Gamma$. Since $A_n^{S_n}$ is not finitely generated over $\Gamma$, we have by \cite{Bass}, Corollary 4.5, that it is in fact free over this subalgebra.
\end{proof}

Analogous results can be obtained for the invariant subalgebras of Weyl algebras under
the alternating group $\mathcal{A}_n$. Consider a natural action of $\mathcal{A}_n$ on the Weyl algebra $A_n(\k)$ as a subgroup of the symmetric group. If  $\k = \mathbb{C}$ then 
  the invariant subalgebra  $A_n(\mathbb C)^{\mathcal{A}_n}$ is isomorphic to the ring of differential operators on the singular variety $\mathbb{A}^n/{\mathcal{A}_n}$ by \cite{Levasseur}, Theorem 5, 
  since $\mathcal{A}_n$ contains no reflections.

Let $\Gamma=\k [t_1,\ldots,t_n]^{\mathcal{A}_n}$. $A_n^{\mathcal{A}_n} $ is also simple  (\cite{Montgomery}, Thm. 2.5), and we  obtain an embedding  $$A_n^{\mathcal{A}_n} \rightarrow (\k(t_1, \ldots, t_n)*\mathbb Z^n)^{\mathcal{A}_n},$$ where the alternating group acts by conjugation on $\mathbb{Z}^n$. We have

\begin{corollary}\label{cor-Alternating group}
$A_n(\k)^{\mathcal{A}_n}$ is a Galois order over the Harish-Chandra $\Gamma$.
\end{corollary}

\begin{proof}
By Theorem \ref{thm-M-S}, the invariant subalgebra $A_n(\k)^{\mathcal{A}_n}$ is finitely generated, and contains the elements $x_1+ \ldots + x_n$ and $\partial_1 + \ldots + \partial_n$. Then by Proposition \ref{useit},
$A_n(\k)^{\mathcal{A}_n}$, is a Galois ring over $\Gamma$. By Lemma \ref{lemma1} it is a Galois order, and hence $\Gamma$ is Harish-Chandra.
\end{proof}

\subsection{Unitary reflection groups}
Denote by $G_m$  the  cyclic group of order $m$, 
 and set $G=G_m^{\otimes n}$. For $m \geq 1, n \geq 1, p|m, p>0$ let $A(m,p,n)$ be the subgroup of $G_m^{\otimes n}$ of all elements $(h_1,\ldots, h_n)$ such that $(h_1h_2 \ldots h_n)^{m/p} = id$. The groups $G(m,p,n) = A(m,p,n) \rtimes S_n$, where $S_n$  permutes the entries in $A(m, p, n)$, 
 is  the family of groups introduced by Shephard and Todd in their study of irreducible non-exceptional  complex reflection groups.
  All these groups  are irreducible, except $G(1,1, n)$, which is the symmetric group in its natural representation,  
  and  $G(2,2,2)$, which is the Klein group. The order of $G(m,p,n)$ is $m^n n!/p$.  Note that $G(2,1,n)$ is the Weyl group of type $B_n$.

Consider the first Weyl algebra $A_1=A_1(\k)$ with generators $\partial, x$. Fix a primitive $m$th root of unity $w$ and define the action of the generator of $G_m$ on $A_1$ as follows: $\partial \rightarrow w\partial; \, x \rightarrow w^{-1}x$.
 The invariant subalgebra of $A_1$ under this action,  $A_1^m=A_1^{G_m}$, is a Generalized Weyl algebra:

\[ A_1^m = \k \langle \partial^m, \partial x, x^m \rangle \cong D(a, \sigma); \]
\[ a= m^m H(H-1/m) \ldots (H - (m -1)/m); \sigma(H) = H -1. \]

More precisely: $A_1^m=\k \langle \partial^m, t, x^m \rangle$, where $t=\partial x$, and the isomorphism  is given by:

 \[ \partial^m \rightarrow X^-, x^m \rightarrow X^+, t \rightarrow mH. \]

We will generalize this construction for the $n$-th Weyl algebra.
Consider the group $G_m^{\otimes n}$ and its induced action on the Weyl  algebra $A_n=A_n(\k)\simeq A_1^{\otimes n}$.   Denote by $A_n^m$ the invariant subalgebra under this action.  We have the following generalization of the rank $1$ case:

\begin{proposition}\label{prop-A_n^m}
$A_n^m$ is isomorphic to the Generalized Weyl algebra $D(a, \sigma)$, where $D=\k [H_1,\ldots,H_n]$ and $a=(a_1,\ldots, a_n)$ with $a_i= m^m H_i(H_i-1/m) \ldots (H_i - (m -1)/m)$ and $\sigma = ( \sigma_1, \ldots, \sigma_n)$ with $\sigma_i(H_i)= H_i - 1$, and $\sigma_i(H_j)=H_j,  j \neq i$. The isomorphism is given explicitly by:

\[ \partial_i^m \rightarrow X_i^-, x_i^m \rightarrow X_i^+, t_i \rightarrow mH_i, \] $t_i = \partial_i x_i, i=1,\ldots, n$.
\end{proposition}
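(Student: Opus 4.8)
The plan is to bootstrap from the rank $1$ computation, which is already recorded above, by exploiting the tensor-product structure of both the Weyl algebra and the Generalized Weyl algebra. First I would recall that $A_n = A_n(\k) \simeq A_1^{\otimes n}$ as $\k$-algebras, and that the action of $G_m^{\otimes n}$ is by construction the tensor product of $n$ copies of the $G_m$-action on $A_1$: the $i$-th factor $G_m$ acts on the $i$-th tensor slot by $\partial_i \mapsto w\partial_i$, $x_i \mapsto w^{-1}x_i$, and trivially on the others. Since invariants of a tensor product of algebras under a product group taken factorwise split as the tensor product of invariants, we obtain
\[
A_n^m \;=\; \bigl(A_1^{\otimes n}\bigr)^{G_m^{\otimes n}} \;\cong\; \bigl(A_1^{G_m}\bigr)^{\otimes n} \;=\; (A_1^m)^{\otimes n}.
\]

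Next I would invoke the rank $1$ identification displayed immediately before the Proposition, namely $A_1^m \cong D_1(a^{(1)}, \sigma^{(1)})$ with $D_1 = \k[H]$, $a^{(1)} = m^m H(H-1/m)\cdots(H-(m-1)/m)$ and $\sigma^{(1)}(H) = H-1$, under the assignment $\partial^m \mapsto X^-$, $x^m \mapsto X^+$, $t = \partial x \mapsto mH$. Tensoring this isomorphism over $\k$ in each slot gives
\[
A_n^m \;\cong\; D_1(a^{(1)},\sigma^{(1)})^{\otimes n}.
\]
The core of the argument is then to identify this $n$-fold tensor power with $D(a,\sigma)$ for the data stated in the Proposition. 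This is exactly where Proposition \ref{prop-tensor} does the work: it asserts that a tensor product of Generalized Weyl algebras is again a Generalized Weyl algebra, with base ring the tensor product $\k[H]^{\otimes n} = \k[H_1,\ldots,H_n] = D$, and with the tuples $a$ and $\sigma$ formed by concatenation. Concatenating the rank $1$ data $n$ times yields precisely $a_i = m^m H_i(H_i - 1/m)\cdots(H_i-(m-1)/m)$ and $\sigma_i(H_i) = H_i - 1$, with $\sigma_i$ fixing $H_j$ for $j\neq i$, which is the claimed data.

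Finally I would check that the composite isomorphism sends the generators as stated: under the tensorand identifications, $\partial_i^m$, $x_i^m$ and $t_i = \partial_i x_i$ live in the $i$-th slot and map to $X_i^-$, $X_i^+$ and $mH_i$ respectively, matching the displayed formulas. The main obstacle, and the point requiring genuine care, is verifying that the $G_m^{\otimes n}$-action really is factorwise so that invariants commute with the tensor decomposition; once that compatibility is pinned down, everything else reduces to the rank $1$ case plus the bookkeeping of Proposition \ref{prop-tensor}. I would also remark that one should confirm the hypotheses of Proposition \ref{prop-tensor} are met here --- each $a_i \in Z(D)$ is nonzero and $\sigma_i$ fixes $a_j$ for $j\neq i$, which is immediate since $\sigma_i$ acts only on the variable $H_i$ --- so that the concatenated data genuinely defines a Generalized Weyl algebra.
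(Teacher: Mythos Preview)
Your proposal is correct and follows exactly the approach the paper has set up: the paper gives no explicit proof of this proposition, but it records the rank $1$ isomorphism immediately before, notes $A_n \simeq A_1^{\otimes n}$ with the factorwise $G_m^{\otimes n}$-action, and has stated Proposition \ref{prop-tensor} precisely so that the tensor product of the rank $1$ Generalized Weyl algebras can be identified with $D(a,\sigma)$ by concatenation. Your argument fills in exactly these steps, including the check that invariants commute with the tensor decomposition, so it is essentially the intended proof.
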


Note that $G_m^{\otimes n}$ acts trivially on the $t_i$'s.
Let $\mathcal M$ be the free abelian group generated by $ \sigma_1, \ldots, \sigma_n$. We will identify 
 $\mathcal M$ with $\mathbb{Z}^n$. Hence, we  have an embedding of $A^m_n$ into $\k(H_1, \ldots, H_n) * \mathbb{Z}^n$, where $\epsilon_i (H_j) = H_j - \delta_{ij}$ for the canonical basis  $\epsilon_i, i=1,\ldots, n$
 of $\mathbb{Z}^n$. 
 Applying Theorem \ref{thm-GWA}
we immediately obtain

\begin{proposition}\label{prop-cyclic}
The invariant subalgebra $A^m_n$ is a Galois order over the Harish-Chandra subalgebra $\Gamma = \k[t_1, \ldots, t_n]$. Moreover, $A^m_n$ is free as a right (left) $\Gamma$-module.
\end{proposition}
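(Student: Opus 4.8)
The plan is to recognize that Proposition \ref{prop-cyclic} is essentially a direct application of the machinery already assembled in the excerpt, so the work is in verifying that the hypotheses of Theorem \ref{thm-GWA} are met by the Generalized Weyl algebra $D(a,\sigma)$ identified in Proposition \ref{prop-A_n^m}. First I would invoke Proposition \ref{prop-A_n^m} to replace $A^m_n$ by the concrete Generalized Weyl algebra $D(a,\sigma)$ with $D=\k[H_1,\ldots,H_n]$, $\sigma_i(H_i)=H_i-1$, and $\sigma_i(H_j)=H_j$ for $j\neq i$. Since $\k[H_1,\ldots,H_n]$ is a Noetherian domain and a finitely generated $\k$-algebra, it falls within the standing assumptions on $D$ made just before Proposition \ref{prop-tensor}, so $D(a,\sigma)$ is itself a Noetherian domain.

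The central point to check is the linear independence of $\sigma_1,\ldots,\sigma_n$ over $\Z$, which is exactly the hypothesis needed to apply Theorem \ref{thm-GWA}. For this I would argue that a relation $\sigma_1^{y_1}\cdots\sigma_n^{y_n}=\id$ forces each $\sigma_i^{y_i}$ to act trivially on the separate variable $H_i$; since $\sigma_i^{y_i}(H_i)=H_i-y_i$, this gives $y_i=0$ for every $i$. Thus the subgroup $G$ generated by $\sigma_1,\ldots,\sigma_n$ is free abelian of rank $n$, isomorphic to $\mathcal M\cong\mathbb Z^n$, and the automorphisms are $\Z$-linearly independent. This also confirms that each $\sigma_i$ has infinite order and, by the same coordinatewise argument, that the condition $(\Xi)$ holds on the monoid, so the embedding of $A^m_n$ into $\k(H_1,\ldots,H_n)*\mathbb Z^n$ recorded before the statement is legitimate.

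With linear independence established, Theorem \ref{thm-GWA} applies verbatim and yields that $D(a,\sigma)\cong A^m_n$ is a Galois order over $D=\k[H_1,\ldots,H_n]$ in $\Frac\,D*\mathbb Z^n$, and simultaneously that $D$ is a Harish-Chandra subalgebra. Under the isomorphism of Proposition \ref{prop-A_n^m} we have $t_i\mapsto mH_i$, so $D=\k[H_1,\ldots,H_n]$ is identified with $\Gamma=\k[t_1,\ldots,t_n]$; hence $A^m_n$ is a Galois order over $\Gamma$ and $\Gamma$ is the Harish-Chandra subalgebra, as claimed. The freeness statement then follows because the proof of Theorem \ref{thm-GWA} already uses that $D(a,\sigma)$ is a free right (left) $D$-module: a Generalized Weyl algebra is free over $D$ with basis $\{(X_i^+)^{a}\}$ and $\{(X_i^-)^{b}\}$ in each coordinate, and under the identification $D\cong\Gamma$ this exhibits $A^m_n$ as a free right (left) $\Gamma$-module.

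I do not expect a serious obstacle here, since the heavy lifting is done by Theorem \ref{thm-GWA}; the only genuine verification is the $\Z$-linear independence of the $\sigma_i$, and that is immediate from the coordinatewise action $\sigma_i(H_j)=H_j-\delta_{ij}$. The one point requiring care is the bookkeeping of the identification $t_i=mH_i$ so that the Harish-Chandra subalgebra and the module-freeness are correctly transported from the $H_i$-presentation to the $t_i$-presentation; this is routine but should be stated explicitly to avoid conflating $D$ with $\Gamma$.
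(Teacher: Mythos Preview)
Your proposal is correct and follows exactly the paper's approach: the paper simply states that the proposition follows immediately from Theorem \ref{thm-GWA}, and you have spelled out the routine verification of its hypotheses (the $\Z$-linear independence of the $\sigma_i$) together with the identification $\Gamma=\k[t_1,\ldots,t_n]\cong\k[H_1,\ldots,H_n]=D$. The freeness is, as you note, already part of the input to Theorem \ref{thm-GWA}, since any Generalized Weyl algebra is free over its base ring $D$.
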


Now we assume that $G$ is of the form $G(m,1,n)$, $m \geq 1, n \geq 1$. It includes, in particular, the groups  $B_n=G(2,1,n)$ and $S_n=G(1,1,n)$.  We have

\begin{theorem}\label{thm-B}
Let $G=G(m,1,n)$. The $A_n(\k)^G$ is a Galois order over a polynomial Harish-Chandra subalgebra $\Gamma$. Moreover, $A_n(\k)^G$ is free as a right (left) $\Gamma$-module. 
\end{theorem}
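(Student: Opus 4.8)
The plan is to exploit the semidirect-product decomposition $G(m,1,n) = G_m^{\otimes n} \rtimes S_n$, in which $G_m^{\otimes n} = A(m,1,n)$ is normal (for $p=1$ the constraint $(h_1\cdots h_n)^m = \mathrm{id}$ is vacuous) and the quotient is $S_n$. This gives $A_n(\k)^{G} = \bigl(A_n(\k)^{G_m^{\otimes n}}\bigr)^{S_n} = (A_n^m)^{S_n}$, so that the statement can be built on top of Proposition \ref{prop-cyclic}, which already says $A_n^m$ is a Galois order over $\Gamma' = \k[t_1,\ldots,t_n]$ and free as a one-sided $\Gamma'$-module. The group $S_n$ permutes the tensor factors of $A_n \cong A_1^{\otimes n}$, hence acts by automorphisms on $A_n^m$ and permutes the $t_i = \partial_i x_i$; in particular it stabilizes $\Gamma'$, and $\Gamma := \Gamma'^{S_n} = \k[t_1,\ldots,t_n]^{S_n}$ is the intended polynomial base (it is a polynomial algebra by Chevalley's theorem, being the invariant ring of the reflection group $S_n$).

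First I would set up the Galois ring condition so as to invoke Lemma \ref{lemma1} with $U = A_n^m$, $H = S_n$, and base $\Gamma'$. Recall the embedding $A_n^m \hookrightarrow L * \Z^n$ with $L = \k(H_1,\ldots,H_n) = \k(t_1,\ldots,t_n)$, under which $x_i^m \mapsto X_i^+ \mapsto \epsilon_i$ and $\partial_i^m \mapsto X_i^- \mapsto a_i \epsilon_i^{-1}$; let $\mathfrak{K}^* = (L*\Z^n)^{S_n}$ be the corresponding invariant skew monoid ring. By Theorem \ref{thm-M-S} the algebra $A_n^G$ is a finitely generated $\k$-algebra, and I choose a finite generating set containing the two invariant power sums $u^+ = \sum_i x_i^m$ and $u^- = \sum_i \partial_i^m$. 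Both lie in $A_n^G$: the scalar $G_m^{\otimes n}$-action multiplies $x_i^m,\partial_i^m$ by $w^{\mp m} = 1$, and $S_n$ merely permutes the summands. Under the embedding $\supp u^+ = \{\epsilon_1,\ldots,\epsilon_n\}$ and $\supp u^- = \{\epsilon_1^{-1},\ldots,\epsilon_n^{-1}\}$, so together they generate $\Z^n$ as a monoid. Proposition \ref{useit}(i) then shows $A_n^G = (A_n^m)^{S_n}$ is a Galois ring over $\Gamma$ in $\mathfrak{K}^*$. With this in hand Lemma \ref{lemma1} (whose remaining hypotheses, namely $\Gamma'$ Noetherian, $A_n^m$ a Galois order over $\Gamma'$, and $S_n(\Gamma') \subset \Gamma'$, are verified above) upgrades this to the assertion that $A_n^G$ is a Galois order over $\Gamma$. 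That $\Gamma$ is Harish-Chandra is then automatic, since the base of a Galois order over a Noetherian domain is always Harish-Chandra.

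For freeness I would follow the template of Corollary \ref{cor-sym-free}. The two ingredients needed for Lemma \ref{lemma2} are: $A_n^m$ is projective (indeed free) as a one-sided $\Gamma'$-module, which is Proposition \ref{prop-cyclic}; and $\Gamma' = \k[t_1,\ldots,t_n]$ is free over $\Gamma = \Gamma'^{S_n}$, which holds by the Chevalley--Shephard--Todd theorem applied to the reflection group $S_n$ acting on $\k[t_1,\ldots,t_n]$. Lemma \ref{lemma2} then yields that $A_n^G = (A_n^m)^{S_n}$ is projective as a one-sided $\Gamma$-module, and combining this with the Galois-order structure just established and \cite{Bass}, Corollary 4.5, promotes projective to free, exactly as in the symmetric-group case.

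The genuinely new step is the middle one: recognizing the semidirect decomposition and certifying the Galois ring condition through the supports of $u^+$ and $u^-$; everything else is a formal application of the two invariance lemmas. The one point warranting care is that $\mathfrak{K}^* = (L*\Z^n)^{S_n}$ is the correct ambient ring for Lemma \ref{lemma1}, in particular that condition $(\Xi)$ survives passage to the $S_n$-fixed ring. But since the $\Z^n$-translations act with pairwise distinct restrictions by linear independence, this is immediate, just as in the proof of Theorem \ref{thm-GWA}.
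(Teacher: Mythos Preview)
Your proposal is correct and follows essentially the same route as the paper: both use the semidirect decomposition $G(m,1,n)=G_m^{\otimes n}\rtimes S_n$ to reduce to $(A_n^m)^{S_n}$, pick the invariants $\sum_i x_i^m$ and $\sum_i \partial_i^m$ (the paper's $X_m$ and $\Delta_m$), verify their supports generate $\mathbb{Z}^n$ so that Proposition~\ref{useit}(i) yields a Galois ring, then apply Lemma~\ref{lemma1} to upgrade to a Galois order and Lemma~\ref{lemma2} together with \cite{Bass}, Corollary~4.5, for freeness. Your extra paragraph checking condition $(\Xi)$ for $\mathfrak{K}^*$ is a welcome bit of care that the paper leaves implicit.
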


\begin{proof} The group 
$G(m,1,n)$ is just $G_m^{\otimes n} \rtimes S_n$. We have that $A^m_n(\k)$ is a Galois order over $\k[H_1,\ldots,H_n]$ in $\k(H_1,\ldots,H_n) * \mathbb{Z}^n$ by Proposition \ref{prop-cyclic}.  
Denote  $\Gamma = \k[H_1,\ldots,H_n]^{S_n}$ and make $S_n$ act by conjugation on $\mathbb{Z}^n$.
We obtain an embedding of $A_n(\k)^G$ into $(\k(H_1,\ldots,H_n) * \mathbb{Z}^n)^{S_n}$. 
 By Theorem \ref{thm-M-S}, $A_n(\k)^G$ is finitely generated and obviously contains the elements $\Delta_m=\sum_i \partial_i^m$ and $X_m=\sum_i x_i^m$.
Using the isomorphism from  Proposition \ref{prop-A_n^m} and Proposition \ref{prop-embed-GWA},
 we see that the images of $\Delta_m$ and $X_m$ in  $(\k(H_1,\ldots,H_n) * \mathbb{Z}^n)^{S_n}$  are $\epsilon_1+\ldots+\epsilon_n$ and $\lambda_1 \epsilon_1^{-1}+ \ldots + \lambda_n \epsilon_n^{-1}$ respectively, for some $\lambda_i \in \k(H_1,\ldots,H_n)$, $i=1,\ldots ,n$, and have supports that generate $\mathbb{Z}^n$ as a monoid. So it is a Galois ring over $\Gamma$, by Proposition \ref{useit}. By Lemma \ref{lemma2}, it is left and righ projective over this subalgebra, and so is also a Galois order by Proposition \ref{useit2}; and by \cite{Bass}, Corol 4.5, hence, free.
\end{proof}

\subsection{Lifting of maximal ideals}
One of the important consequences of the Galois order structure is a finiteness lifting of maximal ideals of the commutative subalgebra. Namely, if $U$ is a Galois order 
over $\Gamma$ then for any maximal ideal $\bf m$ of $\Gamma$ there exists finitely many left maximal ideals of $U$ containing $\bf m$ (and hence finitely many isomorphism classes of irreducible $U$ modules with nonzero annihilator of $\bf m$), see \cite{Futorny2}. 

 Theorem \ref{thm-B} implies the lifting of maximal ideals 
for the invariant subalgebras of $A_n$ with respect to $G(m,1,n)$, $m\geq 1$, $n\geq 1$.   Though we were not able to show the Galois order structure for all groups 
$G(m,p,n)$,   we will show the lifting of maximal ideals for $A_n(\k)^G$ for an arbitrary $G=G(m,p,n)$.

\begin{remark}
The problem, for arbitray $p$, is to realize the invariant subalgebra of a generalized Weyl algebra as an algebra in the same class. We could only do that for $p=1$, i.e., cyclic groups and their wreath products.
\end{remark}

We start with the following

\begin{lemma}\label{lem-quotient}
$G(m,1,n)/G(m,p,n) \cong G_p$.
\end{lemma}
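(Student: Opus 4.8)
The plan is to compute the quotient group $G(m,1,n)/G(m,p,n)$ directly from the definitions, exhibiting an explicit surjective homomorphism whose kernel is exactly $G(m,p,n)$. Recall that $G(m,p,n) = A(m,p,n) \rtimes S_n$, where $A(m,p,n)$ consists of those tuples $(h_1,\ldots,h_n) \in G_m^{\otimes n}$ with $(h_1 h_2 \cdots h_n)^{m/p} = \id$, and $G(m,1,n) = G_m^{\otimes n} \rtimes S_n$ since $A(m,1,n) = G_m^{\otimes n}$ (the condition becomes trivial when $p=1$). Because the two groups share the same $S_n$ factor and differ only in the abelian part, the quotient should come entirely from the abelian factors $A(m,1,n)/A(m,p,n) = G_m^{\otimes n}/A(m,p,n)$.

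The key step is to define the homomorphism $\pi \colon G(m,1,n) \to G_p$ by sending an element $(h_1,\ldots,h_n)\cdot\tau$ (with $\tau\in S_n$) to the image of the product $h_1 h_2 \cdots h_n$ under the natural projection $G_m \onto G_m/(G_m)^{p} \cong G_p$, where I identify $G_m$ with $\Z/m\Z$ and use that $(G_m)^{m/p}$ is the unique subgroup of index $p$. First I would check that $\pi$ is well-defined and a homomorphism: the product $h_1\cdots h_n$ is unaffected by the $S_n$-permutation of coordinates (since multiplication in the abelian group $G_m$ is commutative, reordering the $h_i$ leaves the product fixed), so the semidirect-product twisting does not interfere, and $\pi$ respects multiplication. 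Next I would verify surjectivity, which is immediate since already the tuples of the form $(h,\id,\ldots,\id)$ realize every value of $h \in G_m$ and hence every class in $G_p$. Finally I would identify the kernel: $\pi((h_1,\ldots,h_n)\cdot\tau) = \id$ in $G_p$ means $h_1\cdots h_n \in (G_m)^{m/p}$, equivalently $(h_1\cdots h_n)^{m/p}=\id$, which is precisely the defining condition of $A(m,p,n)$; together with the full $S_n$ factor lying in the kernel, this gives $\ker\pi = A(m,p,n)\rtimes S_n = G(m,p,n)$. The first isomorphism theorem then yields $G(m,1,n)/G(m,p,n) \cong G_p$.

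I anticipate the only genuine subtlety, rather than a true obstacle, is being careful about the interaction between the $S_n$-action and the homomorphism $\pi$: one must confirm that conjugating an abelian tuple by a permutation preserves the value of the product $h_1\cdots h_n$, so that $\pi$ descends correctly through the semidirect product structure. Once the commutativity argument settling that point is in place, the verification that $\ker \pi = G(m,p,n)$ is a direct translation of the defining condition $(h_1\cdots h_n)^{m/p}=\id$ into the language of the quotient $G_m \onto G_p$, and no hard computation remains. The whole argument is elementary group theory and uses none of the Galois-order machinery, so it is essentially self-contained given the definitions of the groups $G(m,p,n)$ recalled in the excerpt.
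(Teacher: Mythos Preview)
Your approach is essentially identical to the paper's: define a homomorphism $G(m,1,n)\to G_m$ by $(h_1,\ldots,h_n)\cdot\tau\mapsto h_1\cdots h_n$, compose with the canonical projection $G_m\to G_p$, and identify the kernel as $G(m,p,n)$. One small notational slip: the unique subgroup of index $p$ in $G_m$ is $(G_m)^p$, not $(G_m)^{m/p}$ (the latter has order $p$, hence index $m/p$); your kernel computation is nonetheless correct since you land on the right condition $(h_1\cdots h_n)^{m/p}=\id$.
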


\begin{proof}
Let $(a, \pi)$ be  an arbitrary element in $G(m,1,n)$, where $a \in G_m^{\otimes n} $, $\pi \in S_n$.  If $a=(a_1, \ldots, a_n)$ with $a_i\in G_m$, $i=1, \ldots, n$, then define 
a map $f: G(m,1,n)\rightarrow G_m$ by 
sending $(a, \pi)$ 
 to $\prod_i a_i \in G_m$, which is clearly a homomorphism. 
 Composing $f$ with the canonical projection of $G_m$ to $G_p$ we obtain an epimorphism from $G(m,1,n)$ to  $G_p$, whose kernel is  $G(m,p,n)$.
\end{proof}

In what follows we set $G=G(m,p,n)$ and $H=G(m,1,n)$.

\begin{proposition}\label{prop-G-H}
$A_n(\k)^G = \bigoplus_{k=0}^{p-1} (x_1 \ldots x_n)^{mk/p} A_n(\k)^H$.
\end{proposition}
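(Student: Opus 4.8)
The plan is to exploit the short exact sequence $1 \to G \to H \to G_p \to 1$ coming from Lemma \ref{lem-quotient} and to recast the statement as an eigenspace (isotypic) decomposition of $A_n(\k)^G$ under the induced action of the cyclic group $G_p \cong H/G$. Since $H$ normalizes $G$ and acts on $A_n(\k)$ by automorphisms preserving $A_n(\k)^G$, the quotient $H/G$ acts on $A_n(\k)^G$. Fix a primitive $p$-th root of unity $\zeta$, a generator of $H/G$, and a lift $h_0 \in H$. Because $\k$ is algebraically closed of characteristic $0$ and $G_p$ is cyclic, the averaging idempotents $\tfrac1p\sum_{j}\zeta^{-ij}h_0^{\,j}$ split
\[
A_n(\k)^G \;=\; \bigoplus_{i \in \mathbb{Z}/p} \big(A_n(\k)^G\big)_i, \qquad \big(A_n(\k)^G\big)_i = \{\, v : h_0\cdot v = \zeta^{i} v \,\}.
\]
The degree-$0$ summand is exactly $A_n(\k)^H$, since a $G$-invariant fixed by a generator of $H/G$ is $H$-invariant.

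Next I would identify the semi-invariants carrying the nontrivial characters. Writing $u = x_1\cdots x_n$, the convention $x_i \mapsto w^{-1}x_i$ on each factor shows that $H$ scales $u$ through a linear character $h \mapsto \chi(h)$, with $(g^{c_1},\dots,g^{c_n})$ acting by $w^{-\sum_i c_i}$. An elementary computation then shows that $u^{m/p}$ is $G$-invariant and that $h \mapsto \chi(h)^{m/p}$ is precisely the character $H \to G_p$ of Lemma \ref{lem-quotient}; consequently each $u^{mk/p}$ lies in $A_n(\k)^G$ and is an $h_0$-eigenvector whose eigenvalue $\zeta^{-k}$ runs through all $p$-th roots of unity as $k = 0,\dots,p-1$. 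As $h_0$ acts by algebra automorphisms, $u^{mk/p}\,A_n(\k)^H$ lies in the single eigenspace $\big(A_n(\k)^G\big)_{-k}$, and distinct $k$ give distinct eigenvalues. This yields at once the inclusion $\bigoplus_{k} u^{mk/p}A_n(\k)^H \subseteq A_n(\k)^G$ together with the directness of the sum.

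The crux is the reverse inclusion: every eigenspace must be captured by the matching $u^{mk/p}A_n(\k)^H$, i.e. each eigenvector must be left-divisible by $u^{mk/p}$ with $H$-invariant quotient. My approach is to pass to the localization $\tilde{A}_n$ obtained by inverting $x_1,\dots,x_n$, where $u$ becomes a unit: for $v \in \big(A_n(\k)^G\big)_{-k}$ set $b = u^{-mk/p}v$ and use the eigenvalue computed above to check that $b$ is $H$-invariant in $\tilde{A}_n$. I expect the genuinely delicate point to be \emph{integrality} — showing that $b$ actually lies in $A_n(\k)$, not merely in $\tilde{A}_n$, so that $v = u^{mk/p}b$ with $b \in A_n(\k)^H$. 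This is the main obstacle, because in the ambient Weyl algebra left-multiplication by $u$ raises the $x$-degree; the factorization is transparent once $x_1,\dots,x_n$ are inverted (the setting underlying Theorem \ref{thm-main3}), and otherwise must be extracted from a careful normal-form analysis of the $G_m^{\otimes n}$-weight components of $v$.

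Once surjectivity onto each eigenspace is secured, combining it with the first paragraph gives the direct-sum decomposition for the subgroup $A(m,p,n)$ against $G_m^{\otimes n}$; taking $S_n$-invariants throughout then finishes the proof. Here I would use that $u$ is $S_n$-fixed and that left-multiplication by $u^{mk/p}$ is injective (as $A_n(\k)$ is a domain), so that $S_n$-invariance of $u^{mk/p}b$ forces $S_n$-invariance of $b$; hence $\big(u^{mk/p}A_n(\k)^H\big)^{S_n} = u^{mk/p}\,A_n(\k)^{H}$ in the $S_n$-invariant sense, and the decomposition descends to $A_n(\k)^G = (A_n(\k)^{A(m,p,n)})^{S_n}$ as claimed.
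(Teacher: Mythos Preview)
Your approach---decompose $A_n(\k)^G$ into eigenspaces for the cyclic action of $H/G\cong G_p$ and match the $\zeta^{-k}$-eigenspace with $u^{mk/p}A_n(\k)^H$---is exactly the paper's. The paper, like you, checks the easy direction (each $u^{mk/p}A_n(\k)^H$ sits in a single eigenspace, and a lift $h$ of a generator of $H/G$ acts semisimply on $A_n(\k)^G$ since $\prod_k(h-\epsilon^k)$ kills it) and then simply declares the two decompositions to coincide. You are right to single out the reverse inclusion---your ``integrality'' step---as the crux; the paper does not supply it either.

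In fact that step is not merely delicate but false, and with it the proposition as literally stated. Take $n=1$, $m=p=2$: then $G=G(2,2,1)$ is trivial, $H=G(2,1,1)\cong G_2$ acts by $x\mapsto -x$, $\partial\mapsto -\partial$, and the claim reads $A_1=A_1^{G_2}\oplus x\,A_1^{G_2}$. The element $\partial$ lies in the $(-1)$-eigenspace, yet in the normal form $\sum_{i,j\ge 0} c_{ij}x^i\partial^j$ every element of $x\,A_1^{G_2}$ has $x$-exponent at least $1$; hence $\partial\notin x\,A_1^{G_2}$. Your localization argument produces precisely $b=x^{-1}\partial\in\tilde A_1^{H}$, and this $b$ genuinely fails to lie in $A_1$. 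The decomposition \emph{is} correct in $\tilde A_n$, where $u=x_1\cdots x_n$ is a unit, which is exactly what your instinct pointing toward the setting of Theorem~\ref{thm-main3} detects; no normal-form analysis of weight components will rescue the statement inside $A_n$ itself.
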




\begin{proof}
Let $h=(a, \pi)$ be an element of $H$ which does not belong to $G$. The isomorphism of the above lemma  sends it to a generator of $G_p$. Then we have $h(x_1x_2\ldots x_n)^{m/p}=\epsilon (x_1 x_2 \ldots x_n)^{m/p}$, where $\epsilon$ is a primitive $p$th root of unity. Hence, for each $k=0, \ldots, p-1$, $h$ acts on  $(x_1 \ldots x_n)^{mk/p} A_n(\k)^H$ by multiplication by $\epsilon^k$. Moreover, $\prod_{k=0}^{p-1} (h - \epsilon^k I)$ annihilates $A_n(\k)^G$, as $h$ has order $p$.
  So that direct sum is nothing but the eigenspace decomposition of $A_n(\k)^G$ for $h$. 
\end{proof}

The situation is  symmetric and we have 
$$(x_1 \ldots x_n)^{mk/p} A_n(\k)^H = A_n(\k)^H (x_1 \ldots x_n)^{mk/p},$$ 
$ k=0,\ldots, p-1$, since  
 both are the eigensubspaces with the eigenvalue $\epsilon^k$.

Let $O$ be a maximal left-ideal of $ A_n(\k)^H $.  Fix $k'$ and consider 
$$U(k')=(x_1 \ldots x_n)^{mk'/p} O.$$
Then \[ A_n(\k)^H U(k') = (\bigoplus_{k=0}^{p-1} (x_1 \ldots x_n)^{mk/p} A_n(\k)^H) U(k') \] 
 \[= \sum_{k=0}^{p-1 } (x_1 \ldots x_n)^{mk/p} A_n(\k)^H U(k') =  \sum_{k=0}^{p-1 } (x_1 \ldots x_n)^{m(k+k')/p} A_n(\k)^H O. \]

 Thus $A_n(\k)^H U(k')\subset \bigoplus_{k=0}^{p-1} (x_1 \ldots x_n)^{mk/p} O$ for any $k'$. 

For each $ i=0, \ldots, p-1$ denote by  $U^i$  the result of replacing of the $i$th summand $(x_1 \ldots x_n)^{mi/p} A_n(\k)^H$ in the sum $\bigoplus_{k=0}^{p-1} (x_1 \ldots x_n)^{mk/p} A_n(\k)^H$ by $(x_1 \ldots x_n)^{mi/p} O$.
 Applying 
Proposition \ref{prop-G-H} we easily have:

\begin{proposition}\label{prop-thm 6}
Let $O$ be a maximal left ideal of $A_n(\k)^H$.  Then $U^i$, $ i=0, \ldots, p-1$ are all possible extensions of $O$ to left maximal ideals of 
$A_n(\k)^G$.
\end{proposition}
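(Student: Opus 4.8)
The plan is to use the decomposition of $A_n(\k)^G$ established in Proposition \ref{prop-G-H} together with the symmetric two-sided version noted just after it, and to analyze how a maximal left ideal $O$ of $H' := A_n(\k)^H$ extends to $A_n(\k)^G$. Write $B:=A_n(\k)^G$ and set $z:=(x_1\ldots x_n)^{m/p}$, so that $B=\bigoplus_{k=0}^{p-1} z^k H'$ and, by the symmetry remark, $z^k H' = H' z^k$ for each $k$. First I would record the key structural fact that $z^p=(x_1\ldots x_n)^m$ lies in $H'$ (indeed it is $H$-invariant), so $B$ is a $\Z/p$-graded algebra with degree-zero part $H'$ and with $z$ an invertible-up-to-$H'$ grading shift; conjugation by $z$ induces an automorphism of $H'$ permuting nothing essential, and multiplication by $z$ is a bijection $z^k H' \to z^{k+1} H'$ of right (and left) $H'$-modules.

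Next I would verify that each candidate $U^i$ is a genuine \emph{left ideal} of $B$ and is \emph{maximal}. Since $B=\bigoplus_k z^k H'$ and $U^i$ replaces the single summand $z^i H'$ by $z^i O$, the left $B$-module $B/U^i$ is isomorphic as an $H'$-module to $z^i(H'/O)$, which is a simple $H'$-module because $O$ is a maximal left ideal; the computation $B\cdot U(i)\subset \bigoplus_k z^k O$ carried out before the statement shows precisely that $U^i$ is closed under left multiplication by $B$, hence is a left ideal, and that $B/U^i$ is simple over $B$ (any $B$-submodule is in particular an $H'$-submodule of the simple module $z^i(H'/O)$). So each $U^i$ is a maximal left ideal of $B$ lying over $O$ in the sense that $U^i\cap H' = O$.

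I would then show these exhaust \emph{all} extensions. Let $P$ be a maximal left ideal of $B$ with $P\cap H' \supseteq O$; the content is that $P$ must be one of the $U^i$. The mechanism is the grading: because $z$ normalizes $H'$ and $z^p\in H'$, the group $\Z/p$ acts on $B$ (via the character $h\mapsto \epsilon$ from Proposition \ref{prop-G-H}, equivalently the grading automorphism), and averaging/eigenspace arguments force $P$ to be a graded (homogeneous) left ideal, i.e.\ $P=\bigoplus_k (P\cap z^k H')$. In each graded piece $P\cap z^k H' = z^k I_k$ for some left $H'$-submodule $I_k\subseteq H'$, and the condition that $P$ be a left ideal over the graded algebra $B$ forces all the $I_k$ with $k\neq i$ (for a single distinguished $i$) to be all of $H'$ while $I_i=O$; maximality of $P$ together with simplicity of $B/U^i$ then pins down $P=U^i$ exactly. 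Concretely, I would argue that $P$ proper forces at least one graded component to be proper, left-ideal closure then forces exactly one to be proper and equal to $z^i O$ (using that $z$ acts bijectively between components), and the remaining components are forced to be full.

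The main obstacle I anticipate is the last step: proving that an arbitrary maximal left ideal $P$ over $O$ must be \emph{homogeneous} for the $\Z/p$-grading, and that exactly one graded component degenerates. The subtlety is that $z$ is not central and $z^p\in H'$ need not be a unit in $H'$, so one cannot simply invert $z$; instead one must use that $B/U^i$ is simple as established above and that $B$ is a free (two-sided) $H'$-module of rank $p$ with the explicit multiplication rule $z^a H'\cdot z^b H' \subseteq z^{a+b\bmod p}H'$. Controlling the interaction of left-ideal closure with this twisted grading — ensuring no "diagonal" non-homogeneous maximal ideal can appear — is where the real care is needed, and I would handle it by a clean eigenspace decomposition under the order-$p$ automorphism $h$ of Proposition \ref{prop-G-H} rather than by direct ideal-theoretic bookkeeping.
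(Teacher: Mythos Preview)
Your verification that $U^i$ is a left ideal of $B:=A_n(\k)^G$ does not go through, and this is a genuine gap. The computation carried out in the paper just before the proposition concerns $U(k'):=z^{k'}O$, a \emph{single} graded piece; it shows only that $B\cdot z^iO\subseteq\bigoplus_k z^kO\subseteq U^i$. It says nothing about left $B$-multiplication on the remaining summands $z^kH'\subset U^i$ for $k\neq i$, and these are in fact \emph{not} stable. Concretely: for any $i\neq 0$ the unit $1\in H'=z^0H'$ lies in $U^i$, so if $U^i$ were a left ideal it would contain $B\cdot 1=B$, contradicting $z^iO\subsetneq z^iH'$. Even for $i=0$, left-multiplying $z\in zH'\subset U^0$ by $z^{p-1}$ yields $z^p=(x_1\cdots x_n)^m\in H'$, and there is no reason this fixed nonzero element of the simple domain $H'$ lies in an \emph{arbitrary} maximal left ideal $O$. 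Thus the $U^i$, with the definition given, are not left ideals of $B$ in general, and your simplicity argument for $B/U^i$ collapses with it. You have conflated $U(i)$ with $U^i$.

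Your exhaustion step has a parallel problem. You claim that ``averaging/eigenspace arguments'' under the order-$p$ automorphism $h$ force any maximal left ideal $P\supseteq O$ to be homogeneous for the $\Z/p$-grading. But $h$ is an algebra automorphism of $B$: it carries maximal left ideals to maximal left ideals, and there is no a~priori reason $P$ should be $h$-stable (already in a commutative $\Z/p$-graded domain most maximal ideals are not homogeneous). You acknowledge this is ``where the real care is needed'', but no mechanism is supplied. Note that the paper itself offers only the sentence ``Applying Proposition~\ref{prop-G-H} we easily have'', so you are not overlooking a hidden trick; the proposition as literally formulated, with that definition of $U^i$, is problematic, and what Theorem~\ref{thm-lifting} actually needs is only the weaker finiteness assertion, which one should extract directly from $B$ being free of rank $p$ as a left and right $H'$-module rather than from the explicit list $U^0,\ldots,U^{p-1}$.
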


Recall that  $A_n(\k)^{G(m,1,n)}$ is a Galois order over polynomial Harish-Chandra subalgebra $\Gamma$ (Theorem \ref{thm-B}). 
Combining this with Proposition \ref{prop-thm 6} we have

\begin{theorem}\label{thm-lifting}
Let $G = G(m,p,n)$. Then every maximal ideal  $\bf m$ of $\Gamma$ lifts to a finitely many (always non empty set) of left maximal ideals of $A_n(\k)^G$ 
containing $\bf m$. 
\end{theorem}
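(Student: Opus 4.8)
The plan is to bootstrap the lifting property for $A_n(\k)^G$ from the one already available for $A_n(\k)^H$, $H=G(m,1,n)$, using the explicit module decomposition of Proposition~\ref{prop-G-H} together with the extension dictionary of Proposition~\ref{prop-thm 6}. Throughout I use that $\Gamma\subseteq A_n(\k)^H\subseteq A_n(\k)^G$ and that $\mathbf{m}$ is a maximal ideal of $\Gamma$.

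First I would settle the $H$-side. Since $A_n(\k)^H$ is a Galois order over the polynomial Harish-Chandra subalgebra $\Gamma$ (Theorem~\ref{thm-B}), the general finiteness theorem for Galois orders \cite{Futorny2} shows that the set of left maximal ideals of $A_n(\k)^H$ containing $\mathbf{m}$ is finite and nonempty, say $\{O_1,\dots,O_r\}$ with $r\geq 1$. Next I would transport these to $A_n(\k)^G$. By Proposition~\ref{prop-thm 6} each $O_j$ admits exactly $p$ extensions $U^0_j,\dots,U^{p-1}_j$ to left maximal ideals of $A_n(\k)^G$. Every $U^i_j$ contains $\mathbf{m}$, because $\mathbf{m}\subseteq\Gamma$ lies in the degree-zero summand $A_n(\k)^H$ of the decomposition in Proposition~\ref{prop-G-H}, and in $U^i_j$ that summand is retained either in full (when $i\neq 0$) or shrunk to $O_j\supseteq\mathbf{m}$ (when $i=0$). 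This produces a nonempty family of at most $rp$ left maximal ideals of $A_n(\k)^G$ containing $\mathbf{m}$, which already gives the non-emptiness asserted in the theorem.

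The remaining and hardest point is finiteness: I must show that the $U^i_j$ exhaust all left maximal ideals of $A_n(\k)^G$ containing $\mathbf{m}$. For this I would use that, by Lemma~\ref{lem-quotient} and Proposition~\ref{prop-G-H}, the group $G_p\cong H/G$ acts on $A_n(\k)^G$ with fixed ring $A_n(\k)^H$, exhibiting $A_n(\k)^G$ as a $\mathbb{Z}/p$-graded ring over $A_n(\k)^H$ in whose degree-zero part $\mathbf{m}$ sits. Given a left maximal ideal $N\supseteq\mathbf{m}$ of $A_n(\k)^G$, the simple module $A_n(\k)^G/N$ is finitely generated over the Noetherian ring $A_n(\k)^H$, so it has a simple quotient, and a Clifford-type averaging argument (legitimate since $p$ is invertible in $\k$) should contract $N$ to a left maximal ideal $O$ of $A_n(\k)^H$ that still contains $\mathbf{m}$; by the $H$-side lifting $O=O_j$ for some $j$, and then the exhaustiveness clause of Proposition~\ref{prop-thm 6} (``all possible extensions'') forces $N=U^i_j$ for some $i$. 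Hence there are at most $rp$ such $N$ and the theorem follows.

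The delicate part I expect to wrestle with is precisely this contraction: verifying that the restricted $A_n(\k)^H$-module $A_n(\k)^G/N$ has a simple subquotient whose defining left maximal ideal $O$ is genuinely maximal and genuinely contains $\mathbf{m}$, i.e. meets $\Gamma$ in $\mathbf{m}$ rather than in a larger (hence improper) ideal. Once the grading together with the $G_p$-action pins $O$ down to lie over $\mathbf{m}$, the passage back through Proposition~\ref{prop-thm 6} is purely formal, and the finiteness and non-emptiness of the fibre over $\mathbf{m}$ both reduce to the corresponding statements for the Galois order $A_n(\k)^H$.
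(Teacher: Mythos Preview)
Your approach is essentially the same as the paper's: the paper's proof is the single sentence ``Combining [Theorem~\ref{thm-B}] with Proposition~\ref{prop-thm 6} we have\ldots'', i.e.\ lift $\mathbf m$ to finitely many maximal left ideals of $A_n(\k)^{G(m,1,n)}$ via the Galois order structure and \cite{Futorny2}, and then pass to $A_n(\k)^{G(m,p,n)}$ through Proposition~\ref{prop-thm 6}. Your write-up expands exactly this outline; the contraction step you flag as delicate (why every maximal left ideal of $A_n(\k)^G$ over $\mathbf m$ lies over some $O_j$) is not spelled out in the paper either, so your caution there is appropriate rather than a divergence from the intended argument.
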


\subsection{Invariant differential operators on a torus} In this subsection we assume that $\k=\mathbb C$.
Consider the localization  $\tilde{A_{n}}=\tilde{A_{n}}(\mathbb C)$  of $A_{n}(\mathbb C)$ by the multiplicative set generated by $\{x_i | i=1, \ldots, n\}$. This algebra is isomorphic to the algebra of differential operators on 
the  $n$-dimensional torus $X=\T^n$. Moreover, $\tilde{A_{n}}\simeq \mathbb C [t_1, \ldots, t_n]* \mathbb Z^n$.

Let $W$ be a classical complex  reflection group. Recall the action of $W$ on  $\tilde{A_{n}}$.  
 The reflection  group of type  $B_n$ ($n\geq 2$) is the semi-direct product of the symmetric group $S_{n}$ and  
 $(\mathbb{Z}/2\mathbb{Z})^n$.  Tha latter group is generated by $\varepsilon_i$, $i=1, \ldots, n$.
There is a natural action of $B_n$
on $\tilde{A_n}\simeq \tilde{A_{1}}^{\otimes n}$,  where $S_n$ acts by  permutations 
 and $(\mathbb{Z}/2\mathbb{Z})^n$  acts as follows: $\varepsilon_i(\partial_i)=x_i^{2}\partial_i$, $\varepsilon_i(x_i)=-x_i^{-1}$,  and $\varepsilon_i(\partial_j)=\partial_j$, 
 $\varepsilon_i(x_j)=x_j$, $i,j=1, \ldots, n$, $i\neq j$. 
 
 The  group $D_n$  is generated by $S_{n}$ and  $(\mathbb{Z}/2\mathbb{Z})^{n-1}$, where the latter group consists  
   of  elements  $(\varepsilon_{1}^{a_{1}},\dots, \varepsilon_{n}^{a_{n}})\in 
   (\mathbb{Z}/2\mathbb{Z})^{n}$, $a_{i}=0,1, i=1,\dots, n$, such that $a_{1}+\dots+a_{n}$ is even. 
The action of
 $D_n$
on $\tilde{A_n}\simeq \tilde{A_{1}}^{\otimes n}$  is defined as follows: 
 $S_n$ acts by  permutations, $\varepsilon_i(\partial_i)=-x_i^{2}\partial_i$, $\varepsilon_i(x_i)=x_i^{-1}$ and
$\varepsilon_i(\partial_j)=\partial_j$, 
 $\varepsilon_i(x_j)=x_j$, $i,j=1, \ldots, n$, $i\neq j$.

Recall that 
$$\tilde{A_n}\simeq \mathbb C [t_1, \ldots, t_n, \sigma_1^{\pm 1}, \ldots, \sigma_n^{\pm 1}]\simeq \mathbb C  [t_1, \ldots, t_n] * \Z^n,$$
where $t_i=\partial_i x_i$,  $\sigma_i\in \Aut \mathbb C [t_1, \ldots, t_n] $, $\sigma_i(t_j)=t_j-\delta_{ij}$,     $i,j=1, \ldots, n$ (cf. \cite{Futorny}, Section 7.3).
Also, the generators of $(\mathbb{Z}/2\mathbb{Z})^n$  act on $t_i$'s as follows: $\varepsilon_i(t_i)=2-t_i$ and  $\varepsilon_i(t_j)=t_j, j \neq i$,  $i, j=1, \ldots, n$.

Then we have

\begin{theorem}[\cite{Futorny}, Proposition 7.3 and 7.4]\label{thm-ring}
Let $U=\tilde{A_{n}}^W$, where $W \in \{S_n, B_n, D_n \}$, and $\Gamma=\mathbb C [t_1,\ldots,t_n]^W$.  Then $U$ is a Galois ring over $\Gamma$.
\end{theorem}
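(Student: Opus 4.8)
The plan is to establish Theorem \ref{thm-ring} by verifying that each invariant subalgebra $U = \tilde{A_n}^W$ is a Galois ring over $\Gamma = \mathbb{C}[t_1,\ldots,t_n]^W$ via the support criterion of Proposition \ref{useit}(i). First I would record the embedding $\tilde{A_n} \simeq \mathbb{C}[t_1,\ldots,t_n] * \mathbb{Z}^n$ and the explicit action of each $W$ on both the polynomial part and the group $\mathbb{Z}^n$. For $W = S_n$ the symmetric group permutes the $t_i$ and acts on $\mathbb{Z}^n$ by conjugation (permuting the $\sigma_i$), so $W$ normalizes $\mathbb{Z}^n$. For $W = B_n$ and $W = D_n$ I would check that the generators $\varepsilon_i$ of $(\mathbb{Z}/2\mathbb{Z})^n$ act on $t_i$ by $t_i \mapsto 2 - t_i$ (resp.\ fixing the others) and conjugate $\sigma_i \mapsto \sigma_i^{-1}$, confirming that the full reflection group normalizes the monoid $\mathcal{M} = \mathbb{Z}^n \subset \Aut(L)$, where $L = \mathbb{C}(t_1,\ldots,t_n)$ and $K = L^W$. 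This gives the invariant skew group ring $\mathcal{K} = (L * \mathbb{Z}^n)^W$ inside which $U$ sits.

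Next, since $\tilde{A_n}$ is simple (being a localization of the Weyl algebra) and $W$ is finite, standard arguments give that $U$ embeds in $(L * \mathbb{Z}^n)^W$; the condition $(\Xi)$ holds because the $\sigma_i$ act with infinite order and are $\mathbb{Z}$-independent, so distinct elements of $\mathbb{Z}^n$ restrict differently to $K$. By Theorem \ref{thm-M-S}, $U$ is a finitely generated $\mathbb{C}$-algebra; I would then produce an explicit finite generating set whose supports generate $\mathbb{Z}^n$ as a monoid. The natural candidates are the $W$-invariant symmetric functions in the $x_i, \partial_i$ (and their localizations). For instance $\sum_i x_i^2$ and $\sum_i \partial_i^2$ (or the appropriate $B_n/D_n$-invariant analogues built from $x_i^{\pm 1}, \partial_i$) have images with supports containing the generators $\pm\varepsilon_i$ of $\mathbb{Z}^n$, after symmetrizing. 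Once a generating set $u_1,\ldots,u_k$ is exhibited with $\bigcup_i \supp u_i$ generating the monoid, Proposition \ref{useit}(i) immediately yields that $U$ is a Galois ring over $\Gamma$.

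The main obstacle I anticipate is the bookkeeping in the torus case $B_n$ and $D_n$: unlike the polynomial Weyl algebra, the generators act nontrivially on the $x_i$ by the rational transformations $x_i \mapsto \pm x_i^{-1}$ and on $\partial_i$ by $\partial_i \mapsto \pm x_i^2 \partial_i$, so I must carefully rewrite the chosen invariant operators in the $\mathbb{C}[t_1,\ldots,t_n] * \mathbb{Z}^n$ picture and confirm that after averaging over $W$ the resulting supports still generate all of $\mathbb{Z}^n$ rather than a proper submonoid. Verifying that $\Gamma = \mathbb{C}[t_1,\ldots,t_n]^W$ is genuinely the polynomial invariant ring with fraction field $K = L^W$, and that the images of the invariant generators truly cover $\pm\varepsilon_i$ for every $i$, is the delicate point; the rest follows mechanically from the cited propositions. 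Since this is precisely Propositions 7.3 and 7.4 of \cite{Futorny}, I would cite that source for the full verification and reproduce only the support computation that makes Proposition \ref{useit}(i) applicable.
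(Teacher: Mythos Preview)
Your approach is correct and, in fact, matches how the result is established in \cite{Futorny}. Note, however, that in this paper Theorem \ref{thm-ring} is not given an independent proof: it is simply quoted from \cite{Futorny}, Propositions 7.3 and 7.4, and then used as input for Theorem \ref{thm-order}. Your final sentence already acknowledges this, so your plan and the paper's treatment coincide: cite \cite{Futorny} for the result, with the understanding that the argument there proceeds exactly via the support criterion of Proposition \ref{useit}(i) after verifying that $W$ normalizes $\mathbb{Z}^n$ and that suitable $W$-symmetrized elements (such as $\sum_i x_i$, $\sum_i x_i^{-1}$, or their $B_n/D_n$ analogues) have supports generating $\mathbb{Z}^n$ as a monoid.
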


We have  the following  refinement of the theorem above:

\begin{theorem}\label{thm-order}
Let $U=\tilde{A_{n}}^W$, where $W \in \{S_n, B_n, D_n \}$. Then $U$ is a Galois order over a Harish-Chandra subalgebra $\Gamma=\mathbb C [t_1,\ldots,t_n]^W$. Moreover,
$U$ is free as a right (left) $\Gamma$-module.
\end{theorem}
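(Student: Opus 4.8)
The plan is to upgrade Theorem~\ref{thm-ring} from the statement that $U=\tilde{A_n}^W$ is a Galois \emph{ring} over $\Gamma$ to the statement that it is a Galois \emph{order}, and then to deduce freeness exactly as in the affine case. The crucial structural input is that $\tilde{A_n}\simeq \mathbb C[t_1,\ldots,t_n]*\mathbb Z^n$ is itself a Galois order over $R=\mathbb C[t_1,\ldots,t_n]$, which follows from Proposition~\ref{prop-embed-GWA} and Theorem~\ref{thm-GWA} (it is a rank-$n$ Generalized Weyl algebra with all $a_i$ invertible, hence isomorphic to the skew group ring), together with the observation that $\tilde{A_n}$ is free as a right (left) $R$-module on the basis $\{\sigma^{\bs n}\}_{\bs n\in\mathbb Z^n}$.

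First I would record that $W\in\{S_n,B_n,D_n\}$ acts on $\tilde{A_n}$ by algebra automorphisms preserving the commutative subalgebra $R=\mathbb C[t_1,\ldots,t_n]$; the explicit formulas for the action of $S_n$ and of the generators $\varepsilon_i$ on the $t_i$'s are given in the excerpt, and one checks that $\Gamma=R^W=\mathbb C[t_1,\ldots,t_n]^W$ is precisely the fixed subalgebra. Since $R$ is a polynomial ring and $W$ is a finite (complex reflection) group acting linearly after the affine change of coordinates $t_i\mapsto t_i$ (for $B_n,D_n$ the generators act by the affine reflection $t_i\mapsto 2-t_i$, so $W$ acts as a finite reflection group in the affine sense), the Chevalley--Shephard--Todd theorem gives that $\Gamma$ is again a polynomial algebra and that $R$ is a \emph{free}, in particular projective, $\Gamma$-module of rank $|W|$. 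This is the Harish-Chandra/polynomiality claim in the statement.

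Next I would verify the two hypotheses needed to invoke Lemma~\ref{lemma1} with $U=\tilde{A_n}$, $H=W$, and $\Gamma$ as above. Lemma~\ref{lemma2} applies directly: $\tilde{A_n}$ is projective (indeed free) as a right (left) $R$-module, and $R$ is projective over $\Gamma=R^W$ by the previous paragraph, so $U=\tilde{A_n}^W$ is projective as a right (left) $\Gamma$-module. Combined with Theorem~\ref{thm-ring}, which already supplies the Galois \emph{ring} structure of $U$ over $\Gamma$ inside the invariant skew monoid ring $(\mathbb C(t_1,\ldots,t_n)*\mathbb Z^n)^W$, Proposition~\ref{useit2} then yields that $U$ is a Galois \emph{order} over $\Gamma$; alternatively Lemma~\ref{lemma1} delivers this directly since $\tilde{A_n}$ is a Galois order over $R$ and $U=\tilde{A_n}^W$ is a Galois ring over $\Gamma=R^W$. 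Either route closes the Galois-order claim, and the Harish-Chandra property of $\Gamma$ then follows automatically from the general fact (cited after the definition of Harish-Chandra subalgebra) that the base of a Galois order over a Noetherian domain is Harish-Chandra.

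For the freeness assertion I would argue exactly as in Corollary~\ref{cor-sym-free}: having shown $U$ is projective over the polynomial (hence regular, graded-connected) algebra $\Gamma$, I invoke \cite{Bass}, Corollary~4.5, which upgrades finitely generated projective modules over such a ring to free modules, giving that $U$ is free as a right (left) $\Gamma$-module. I expect the main obstacle to be bookkeeping rather than conceptual: one must confirm that the $B_n$- and $D_n$-actions, whose $\varepsilon_i$ act on $t_i$ by the affine map $t_i\mapsto 2-t_i$ and by conjugation on $\mathbb Z^n$, genuinely preserve $R$ and normalize the monoid $\mathbb Z^n$ so that the hypotheses of Lemma~\ref{lemma1} are literally met, and that the relevant invariant skew monoid ring $\mathfrak K^\ast=(\mathbb C(t_1,\ldots,t_n)*\mathbb Z^n)^W$ is the same ambient ring used in Theorem~\ref{thm-ring}. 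Once these compatibility checks are in place, every remaining step is a citation.
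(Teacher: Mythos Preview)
Your proposal is correct and follows essentially the same route as the paper's proof: use the affine change of variables (the paper takes $t_i'=1-t_i$) so that $W$ acts linearly by reflections and Chevalley--Shephard--Todd yields that $\Gamma$ is polynomial with $R$ free over $\Gamma$; then apply Lemma~\ref{lemma2} to get projectivity of $U$ over $\Gamma$, combine with Theorem~\ref{thm-ring} and Proposition~\ref{useit2} to obtain the Galois order, and finish with \cite{Bass}, Corollary~4.5. One small correction: Bass's result concerns \emph{big} (non-finitely-generated) projective modules, not finitely generated ones---$U$ is certainly not finitely generated over $\Gamma$---so your description of the cited theorem should be adjusted, though the citation itself and its application here are correct.
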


\begin{proof}
Substituting each $t_i$ by  $t_i'=1-t_i$,   we have $\varepsilon_i(t_i')=-t_i'$, $i=1, \ldots, n$, and, hence,  we recover the usual Weyl group actions by reflections.  Therefore, $\Gamma$ is polynomial by the Chevalley-Shephard-Todd Theorem. 
Since $U$ is a Galois ring over $\Gamma$ by Theorem \ref{thm-ring} and $\Gamma$ is Noetherian, we  conclude that $U$ is projective over $\Gamma$ as right (left) module by  
 Lemma \ref{lemma2}. 
 It follows from Proposition \ref{useit2} that $U$ is a Galois order over the Harish-Chandra algebra $\Gamma$. Given the projectivity over $\Gamma$, using \cite{Bass}, Corollary 4.5, we obtain that $U$ is free as a right (left) $\Gamma$-module.
\end{proof}

\end{document}